\documentclass[12pt,a4paper]{amsart}
\usepackage[foot]{amsaddr}
 
\usepackage{pdfsync}
\usepackage[stable]{footmisc}
\usepackage[utf8]{inputenc}
\usepackage{graphicx}
\usepackage{amsmath}
\usepackage{amsfonts}
\usepackage{amssymb}
\usepackage{amsthm}
\usepackage[margin=2cm]{geometry}
\usepackage[utf8]{inputenc}
\usepackage{amsmath}
\usepackage{amsfonts}
\usepackage{amssymb}
\usepackage{amsthm}
\usepackage{geometry}
\usepackage{tikz}
\usetikzlibrary{shapes.geometric}
\usepackage{mathtools}
\usepackage[dvipsnames]{xcolor}
\usepackage{url}
\usepackage{enumitem}
\usepackage{float}
\usepackage{graphicx}
\usepackage[dvipsnames]{xcolor}
\usepackage{multirow}
\usepackage[hypertexnames=false]{hyperref}
\newcommand{\RR}{\mathbb{R}}

\newcommand{\eps}{\varepsilon}

\newcommand{\dd}{\mathrm{d}}

\newcommand{\BB}{\mathcal{B}}
\newcommand{\TT}{\mathcal{T}}

\newtheorem{theorem}{Theorem}[section]

\newtheorem{proposition}[theorem]{Proposition}
\newtheorem{lemma}[theorem]{Lemma}

\theoremstyle{definition}

\newtheorem{remark}[theorem]{Remark}
\numberwithin{equation}{section}

\usepackage[backend=bibtex,style=numeric,citestyle=numeric, doi=false,isbn=false,maxbibnames=99]{biblatex}
\title{Galerkin Approximation of the Fractional Hardy Constant}

       	\author[A.~Dima]{ANDREEA DIMA}
	\address[A.~Dima]{``Simion Stoilow''  Institute of Mathematics of the Romanian Academy, 21 Calea Grivi\c{t}ei Street, 010702, Bucharest, Romania}
\email[A.~Dima]{andreeadima21\@@{}gmail.com}
    
\author[L.~I.~Ignat]{Liviu I. Ignat}
\address[L.~I.~Ignat]{Institute of Mathematics ``Simion Stoilow'' of the Romanian Academy, 21 Calea Grivitei Street, 010702 Bucharest, Romania.
    \newline\indent
 National University of Science and Technology Politehnica Bucharest, 313 Splaiul Independen\c tei, 060042 Bucharest, Romania.    \newline\indent
 Academy of
Romanian Scientists, Ilfov Street, no. 3, Bucharest, Romania.}
\email[L.~I.~Ignat]{liviu.ignat\@@{}gmail.com}
        \subjclass[2020]{65N30, 46E35}
\keywords{Fractional Hardy Inequality, Approximation and Stability, Finite Element Method, Logarithmic Improvement}

\begin{document}
\begin{abstract}
We establish sharp estimates for the discrete optimal constant of the fractional Hardy Inequality in dimension $N\geq 1$, with fractional exponent $s\in \left(0,\min\left\{1,\frac{N}{2}\right\}\right)$. The convergence rates that we establish take place for the Galerkin approximation with piecewise linear elements, when the computations are carried out in a bounded, convex and smooth domain containing the origin, for which we employ a quasi-uniform and regular mesh.
\end{abstract}

\maketitle

\section{Introduction}

 The approximation of optimal constants in functional inequalities by finite-dimensional
numerical schemes is a natural   companion problem to the analysis of the inequalities themselves: it quantifies how much of the sharp continuous theory survives discretization,
and the rate of convergence often reflects deep structural features of the extremizing
sequence -- concentration, non-attainment, or symmetry -- that are otherwise invisible at
the continuous level. This program has been carried out for Sobolev's inequality \cite{pratelli,ignat2025optimalconvergenceratesfinite},  fractional   Sobolev's inequality\cite{dima2026galerkin}, and  for Hardy's inequality \cite{ignat2024,Ignat2025}; the present paper extends it to the fractional Hardy
inequality.

Hardy's Inequality, which is central in analysis and mathematical physics states that
\begin{equation*}
\int_{\RR^N} |Du(x)|^p \, \dd x\geq \left(\frac{|N-p|}{p}\right)^p \int_{\RR^N} \frac{|u(x)|^p}{|x|^p} \, \dd x,
\end{equation*}
where $u\in C_0^{\infty}(\RR^N)$ if $1\leq p<N$, while if $p>N$, $u\in C_0^{\infty}(\RR^N\setminus \{0\})$. The constant $\lambda_{N,p}\coloneqq \left(\frac{|N-p|}{p}\right)^p$ is sharp and, for $p>1$, it is not attained in $\dot{W}^{1,p}(\RR^N)$, respectively in $\dot{W}^{1,p}(\RR^N\setminus \{0\})$, while if $p=1$ it is attained if $u$ is a symmetric decreasing function.

In \cite{ignat2024,Ignat2025} the authors used the Galerkin approximation with piecewise linear elements in order to determine how far is the discrete optimal constant to the continuous one, in the case $p=2$ and $N\neq 2$. They obtained a logarithmic order of convergence, namely that $\lambda_{N,2,h}-\lambda_{N,2}\sim \frac{1}{|\log h|^2}$. 

%Their methods differ from the ones developed for the Sobolev's Inequality and the lower bound is established using a logarithmic-improvement of Hardy's Inequality from \cite{peral2021elliptic},
%\begin{equation*}
%\int_{\Omega} |Du|^2 \, \dd x-\lambda_{N,2}\int_{\Omega} \frac{u^2}{|x|^2} \, \dd x\geq C(N,\Omega) \int_{\Omega} |Du|^2 \left(\log\left(\frac{1}{|x|}\right)\right)^{-2} \, \dd x,\ u\in C^\infty_c(\Omega),
%\end{equation*}
%where $0\in \Omega\subset\overline{\Omega}\subset B_1(0)$ is a bounded domain and $C(N,\Omega)$ is a positive constant. The upper bound is proved by considering an approximation of the function $\tilde{u}(x) \coloneqq |x|^{-\frac{N}{2}+1} \left(\log\left(\frac{1}{|x|}\right)\right)^\alpha$, for $\alpha>\frac{1}{2}$ and project it on the finite-dimensional space $V_h$.

The fractional analogue of Hardy's inequality reads
\[
\int_{\RR^N} \int_{\RR^N} \frac{|u(x)-u(y)|^p}{|x-y|^{N+ps}} \, \dd x\, \dd y \geq {\lambda}_{N,s}(p) \int_{\RR^N} \frac{|u(x)|^p}{|x|^{ps}} \, \dd x,
\]
  where $s\in (0,1)$, $u\in \dot{W}^{s,p}(\RR^N)$ if  $1\leq p<\frac{N}{s}$ and $u\in\dot{W}^{s,p}(\RR^N\setminus\{0\})$ if $p>\frac{N}{s}$. 
The best constant in the general case has been obtained  in \cite{FRANK20083407}; even for $p=2$ is known from the related work  in \cite{Herbst1977}.

In this paper we extend these results in \cite{ignat2024,Ignat2025} by obtaining the order of approximation of the fractional Hardy constant in the case $p=2$. 
In this particular case $$\lambda_{N,s}:=\inf_{u\in \dot{H}^s(\RR^N),\ u\neq 0} \frac{[u]^2_{\dot{H}^s(\RR^N)}}{\|u|x|^{-s}\|^2_{L^2(\RR^N)}}=\frac{2\pi^{\frac{N}{2}} \Gamma^2\left(\frac{N+2s}{4}\right)\Gamma\left(\frac{N+2s}{2}\right)}{\Gamma^2\left(\frac{N-2s}{4}\right) |\Gamma(-s)|}.$$
where we denoted 
\begin{equation*}
[u]^2_{\dot{H}^s(\RR^N)} \coloneqq  \int_{\RR^N} \int_{\RR^N} \frac{|u(x)-u(y)|^2}{|x-y|^{N+2s}} \, \dd x\, \dd y.
\end{equation*}
The above infimum is not achieved and a so-called "pseudo-minimizer" is the function $$u_1(x)=|x|^{-\frac{N-2s}{2}}$$
(see \cite[Proposition 1.2]{chenweth2021} for $N\geq 2$).

Given a bounded domain $\Omega\subset\RR^N$ containing the origin, it is known that $\lambda_{N,s}(\Omega)=\lambda_{N,s}$, where $$\lambda_{N,s}(\Omega)\coloneqq \inf_{u\in \dot{H}^s_0(\Omega),u\neq 0} \frac{[u]^2_{\dot{H}^s(\RR^N)}}{\|u|x|^{-s}\|^2_{L^2(\Omega)}},$$
and $\dot{H}^s_0(\Omega)$ is the homogeneous fractional Sobolev space, defined as the completion of the functions in $C^{\infty}_0(\Omega)$, extended with zero outside $\Omega$, under the seminorm $[u]^2_{\dot{H}^s(\RR^N)}$ (see \cite{Tzirakis23}). The above infimum is also not achieved.

We can approximate the fractional Hardy constant $\lambda_{N,s}$ by the corresponding finite-dimensional minimization problem in the space $V_h$, consisting of piecewise linear functions associated to a finite element mesh in a bounded, convex and smooth domain $\Omega$ containing the origin and extended with 0 outside $\Omega_h$, where $\Omega_h$ is the polyhedral domain that approximates $\Omega$: $$\lambda_{N,s,h}(\Omega)=\inf_{u_h\in V_h} \frac{[u_h]^2_{\dot{H}^s(\RR^N)}}{\|u_h|x|^{-s}\|^2_{L^2(\Omega)}}.$$
The following theorem provides sharp bounds for the difference $\lambda_{N,s,h}(\Omega)-\lambda_{N,s}$, which are the same as in the classical case, i.e. $s=1$, obtained in \cite{Ignat2025} and \cite{ignat2024}:
\begin{theorem}
\label{thm:main}
Let $\Omega\subset\RR^N$ be a bounded, convex and smooth domain containing the origin. For any small enough $h>0$ it holds that: $$\lambda_{N,s,h}(\Omega)-\lambda_{N,s}\sim\frac{1}{|\log h|^2}.$$
\end{theorem}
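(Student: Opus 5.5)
The proof splits into a lower bound $\lambda_{N,s,h}(\Omega)-\lambda_{N,s}\gtrsim |\log h|^{-2}$ and a matching upper bound, following the strategy of \cite{ignat2024,Ignat2025} for $s=1$.

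\textbf{Lower bound.} We will use a logarithmic improvement of the fractional Hardy inequality on a bounded domain. Assume $\Omega\subset B_R(0)$ with $R<1$ after scaling. The target inequality, for $u\in \dot H^s_0(\Omega)$, is
\[
[u]^2_{\dot H^s(\RR^N)}-\lambda_{N,s}\int_\Omega \frac{u^2}{|x|^{2s}}\,\dd x\ \geq\ C\int_\Omega \frac{u^2}{|x|^{2s}}\Big(\log\frac{1}{|x|}\Big)^{-2}\dd x .
\]
This can be obtained through the ground state representation of Frank--Lieb--Seiringer. Write $u=|x|^{-\frac{N-2s}{2}}v$. The left-hand side then equals
\[
\iint \frac{|v(x)-v(y)|^2}{|x-y|^{N+2s}}\,\frac{\dd x\,\dd y}{|x|^{\frac{N-2s}{2}}|y|^{\frac{N-2s}{2}}}.
\]
To this weighted form we apply a one-dimensional Hardy-type estimate in the variable $\log(1/|x|)$, in the spirit of the remainder terms in the fractional Hardy literature.

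For $u_h\in V_h$, the improvement term is small only near the origin. On $\{|x|>h^{\theta}\}$ the weight $(\log(1/|x|))^{-2}$ is at least $c|\log h|^{-2}$. On $\{|x|<h^\theta\}$ we will show that a piecewise linear function cannot concentrate its Hardy mass there. Concretely, on the few elements near $0$ we have $u_h^2\lesssim$ an average of $u_h^2$ over the neighbouring elements. We then use inverse-type estimates, together with the integrability of $|x|^{-2s}$ (since $2s<N$), to bound $\int_{B_{h^\theta}}u_h^2|x|^{-2s}$ by a small multiple of the full Hardy mass. Combining the two regions gives
\[
[u_h]^2-\lambda_{N,s}\|u_h|x|^{-s}\|^2\gtrsim |\log h|^{-2}\,\|u_h|x|^{-s}\|^2 .
\]

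\textbf{Upper bound.} Take $\tilde u(x)=|x|^{-\frac{N-2s}{2}}\big(\log\frac1{|x|}\big)^{\alpha}$ with $\alpha>1/2$, cut off smoothly near $\partial\Omega$ and modified inside $B_{h}$ by freezing it at the value attained at $|x|=h$. Denote this function by $w_h$.

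\emph{Continuous quotient.} We compute the Rayleigh quotient of $w_h$ via the ground-state representation. The Hardy denominator is of order $|\log h|^{2\alpha+1}$. The weighted energy of $v=(\log 1/|x|)^\alpha$, truncated at $h$, is of order $|\log h|^{2\alpha-1}$: its leading part behaves like $\int |\partial_t v|^2\,\dd t$ in $t=\log(1/r)$, since the kernel is effectively local at logarithmic scales. Hence
\[
\frac{[w_h]^2}{\|w_h|x|^{-s}\|^2}\le \lambda_{N,s}+C|\log h|^{-2}.
\]

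\emph{Discretization.} We then replace $w_h$ by its Lagrange interpolant $I_hw_h\in V_h$. Standard interpolation estimates in fractional norms, made local on dyadic annuli $\{2^{-k-1}<|x|<2^{-k}\}$ using scaling, should give
\[
[w_h-I_hw_h]_{H^s}^2\lesssim \sum_k h^{2(2-s)}\,2^{k(2-s)\cdot2}\,\big(2^{k\frac{N-2s}{2}}k^\alpha\big)^2\,2^{-kN}.
\]
The sum is dominated by $|x|\sim h$, where it contributes $O(|\log h|^{2\alpha})$, which is lower order. The same holds for the Hardy term, and for the boundary layer $\Omega\setminus\Omega_h$, which is $O(h)$ by convexity and smoothness. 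Cross terms are handled by Cauchy--Schwarz; they are $O(|\log h|^{2\alpha-1/2})$ relative to the denominator $|\log h|^{2\alpha+1}$. Choosing $\alpha$ close to $1/2$ if necessary keeps everything within $|\log h|^{-2}$.

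\textbf{Main obstacle.} The delicate step is the exact logarithmic asymptotics of the nonlocal energy of the truncated profile. The ground-state representation involves a genuinely nonlocal weighted kernel, and we must show that the interactions between distant annuli only produce $O(|\log h|^{2\alpha-1})$. We plan to do this by passing to logarithmic radial coordinates and bounding the kernel by an integrable function of $t-t'$.

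The lower-bound localization near the origin for discrete functions is the second difficulty. There the nonlocal seminorm does not localize, so we would instead use the Hardy mass bound directly rather than the energy.
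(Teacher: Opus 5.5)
Both halves of your proposal have a genuine gap; the upper-bound gap is the more serious one.

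\textbf{Upper bound: the competitor.} Your competitor cannot give better than $|\log h|^{-1}$. Freezing $\tilde u$ on $B_h$ means that, in the ground-state variable $v=|x|^{a}w_h$ with $a=\frac{N-2s}{2}$, you have $v(x)=|\log h|^{\alpha}(|x|/h)^{a}$ on $B_h$. So $v$ drops from $|\log h|^{\alpha}$ to $0$ within a single dyadic scale. Rescaling $(x,y)\mapsto(hx,hy)$, the contribution of $B_h\times B_h$ to the weighted form is exactly
\[
|\log h|^{2\alpha}\iint_{\BB\times\BB}\frac{\bigl(|x|^{a}-|y|^{a}\bigr)^2}{|x-y|^{N+2s}}\,\frac{\dd x\,\dd y}{|x|^{a}|y|^{a}},
\]
which is a fixed positive multiple of $|\log h|^{2\alpha}$. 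In the variable $t=\log(1/|x|)$, this is the cost of a jump of height $|\log h|^{\alpha}$ over an $O(1)$ interval. Since the integrand is nonnegative, the deficit of $w_h$ is $\gtrsim|\log h|^{2\alpha}$, not $|\log h|^{2\alpha-1}$. Your quotient is therefore $\ge\lambda_{N,s}+c|\log h|^{-1}$ for every $\alpha$, so tuning $\alpha$ cannot help.

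The missing idea is to spread the cut-off over logarithmically many scales. The paper's $\eta_\eps$ goes from $0$ to $1$ between $\eps^2$ and $\eps$ with $|D\eta_\eps|\lesssim(|x||\log\eps|)^{-1}$. The transition then costs $|\log\eps|^{2\alpha}\cdot|\log\eps|^{-2}\cdot|\log\eps|=|\log\eps|^{2\alpha-1}$, the same order as the bulk.

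\textbf{Upper bound: discretization.} The paper also places this transition at scales $\ge\eps^2=h^{\gamma}\gg h$, with $\gamma<1$, so the interpolation error is polynomially small: $[I_hu_\eps-u_\eps]^2_{\dot H^s(\RR^N)}\lesssim h^{2(2-s)(1-\gamma)}|\log h|^{2\alpha}$. With your scale-$h$ truncation the error is not small; by your own count $[w_h-I_hw_h]^2\sim|\log h|^{2\alpha}$. Cauchy--Schwarz then only bounds the cross term $2[w_h][w_h-I_hw_h]$ by $|\log h|^{\alpha+\frac12}|\log h|^{\alpha}$. That is a relative error of $|\log h|^{-1/2}$, not $|\log h|^{-2}$, and choosing $\alpha$ near $1/2$ does not change it. The paper avoids the numerator cross term entirely by using the $[\cdot]_{\dot H^s(\RR^N)}$-projection $\Pi_h$, for which $[\Pi_hu_\eps]\le[u_\eps]$, and pays only in the denominator through Hardy's inequality.

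\textbf{Lower bound.} You claim that $\int_{B_{h^\theta}}u_h^2|x|^{-2s}\,\dd x$ is a small multiple of the total Hardy mass. This is false for general $u_h\in V_h$: a nodal basis function at a vertex closest to $0$ is supported in $\overline{B_{2h}}$ (in $\overline{B_h}$ if $0$ is a vertex). All of its Hardy mass therefore sits in the region you want to show is negligible.

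The fix is not to discard the improvement term near $0$. Since $r\mapsto r^{s}\log(e/r)$ is increasing near $0$, on $B_{2h}$ the improvement weight satisfies $|x|^{-2s}\log^{-2}(e/|x|)\ge(2h)^{-2s}\log^{-2}(e/2h)$. For piecewise linear functions, your own ingredients give the local bound $\int_{B_h}u_h^2|x|^{-2s}\,\dd x\lesssim h^{-2s}\int_{B_{2h}}u_h^2\,\dd x$. This follows from $\|u_h\|^2_{L^\infty(B_h)}\lesssim h^{-N}\|u_h\|^2_{L^2(B_{2h})}$ and $\int_{B_h}|x|^{-2s}\,\dd x\sim h^{N-2s}$, using $2s<N$. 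The paper instead uses a local Hardy-type inequality plus the $H^1$ inverse estimate. Either way, the improvement term on $B_{2h}$ controls $|\log h|^{-2}\int_{B_h}u_h^2|x|^{-2s}\,\dd x$. On $\{|x|>h\}$ the weight is already $\gtrsim|\log h|^{-2}$, so adding the two regions gives the bound.

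This works only at scale $h$, i.e.\ $\theta=1$. For $\theta<1$ no inverse estimate localizes, but none is needed there. Finally, your derivation of the logarithmic improvement itself is only sketched; it is Tzirakis's theorem and can simply be cited.
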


The main ingredients to obtain a such result are the following ones. The first one is  a logarithmic improvement in the right hand side of the Hardy's inequality as in    
\cite[Theorem 5]{TZIRAKIS20164513} (see Section \ref{sec:lower-bound}). This will give us a lower bound $\gtrsim |\log h|^{-2}$ in the above theorem. The second one is a ground state representation as in \cite[Prop.~4.1]{frankJAMS2008} together with a well-chosen competitor $u_\eps$ for the pseudo-minimizer $u_1$ such that 
\[
\frac{[u_\eps]_{\dot{H}^s(\RR^N)}^2}{\|u_\eps |x|^{-s}\|^2_{L^2(\RR^N)}}-\lambda_{N,s}\simeq |\log \eps|^{-2}
.\]
These ingredients together with the classical properties of the piecewise linear finite element space approximations will give us the main result of this paper.

The paper is organized as follows: in Section \ref{sec:discrete-framework} we present the discrete framework, while in Sections \ref{sec:lower-bound} and \ref{sec:upper-bound} we derive the lower and upper bounds respectively, completing the proofs of the main theorem. In Section \ref{sec:future-research} we present what can be done for some $L^p$-versions of the fractional Hardy constant and what difficulties one faces. Finally, the Appendix collects the proofs of several technical results that play an important role in deriving the optimal convergence rate from Theorem \ref{thm:main}.

\section{The discrete framework}
\label{sec:discrete-framework}
Let $\Omega\subset\RR^N$ be a bounded, convex and smooth domain and $\mathcal{T}_h$ be a regular and quasi-uniform triangulation of $\Omega$ (the set of intervals when $N=1$/triangles when $N=2$/tetrahedrons when $N=3$ etc.) i.e. if we denote by $h_T$ the diameter of the triangle $T\in \mathcal{T}_h$, by $\rho_T$ the diameter of the largest ball contained in T and by $h=\max_{T\in \mathcal{T}_h} h_T$, then there exists a positive constant $\sigma$ (independent of $h$) such that $$\frac{h_T}{\rho_T}\leq \sigma$$ for any $T\in\mathcal{T}_h$ and for any $h>0$, respectively $$\inf_{h>0} \frac{\min_{T\in \mathcal{T}_h} h_T}{\max_{T\in\mathcal{T}_h} h_T} \coloneqq \rho>0.$$ Let $\Omega_h$ be the union of the triangles in $\mathcal{T}_h$ (such that all the nodes of $\partial \Omega_h$ lie on $\partial \Omega$) and
\[V_h\coloneqq\left\{f\in C(\bar{\Omega}): f \text{ is affine on each }T\in \TT_h\text{ and } f\vert_{\bar\Omega\setminus \Omega_h}\equiv 0\right\} \subset H_0^1(\Omega).\] 

We introduce the following notation: for two expressions $E$ and $F$, we write $E\lesssim F$ if there exists a constant $C>0$ which depends only on the dimension $N$, the fractional exponent $s\in (0,1)$, and the constants $\sigma$ and $\rho$ such that $E\leq C\, F$. We also write $E\sim F$ provided that both $E\lesssim F$ and $F\lesssim E$ hold true. If the constants involved also depend on some other parameters e.g. $p$, $q$, then we write $\lesssim_p$, $\lesssim_{p,q}$, $\sim_q$ etc.

We also denote by $B_R(0)$ the open ball of radius $R>0$ in $\RR^N$ centered at the origin and by $\BB$ the unit ball in $\RR^N$ centered at the origin.

\section{Proof of the lower bound in Theorem \ref{thm:main}}
\label{sec:lower-bound}
The main tool that we employ to prove the lower bound in Theorem \ref{thm:main} is the following logarithmic-improvement of the fractional Hardy Inequality on domains:
\begin{proposition}{\cite[Theorem 5]{TZIRAKIS20164513}}\label{prop:log-improvement}
Let $N\geq 1$, $s\in \left(0,\min\left\{1,\frac{N}{2}\right\}\right)$ and $\Omega\subset\RR^N$ be a bounded domain containing the origin. There exists a constant $C=C_{N,s}$ such that the following inequality holds for any function $u\in\dot{H}^s_0(\Omega)$:
\begin{equation}
[u]^2_{\dot{H}^s(\RR^N)}\geq \lambda_{N,s} \int_{\RR^N} \frac{u^2(x)}{|x|^{2s}} \, \dd x+C\int_{\Omega} \frac{u^2(x)}{|x|^{2s}} \frac{1}{\log^2\left(\frac{eD}{|x|}\right)} \, \dd x,
\end{equation}
where $D=\sup_{x\in\Omega} |x|$.\\
\end{proposition}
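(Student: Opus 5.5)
Proposition \ref{prop:log-improvement} is quoted from \cite[Theorem 5]{TZIRAKIS20164513}, so here is how one would prove it directly, via a ground state representation.

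\emph{Step 1: ground state representation.} Take $\alpha=\frac{N-2s}{2}$, so that $w(x)=|x|^{-\alpha}$ is the pseudo-minimizer and $(-\Delta)^s w=\lambda_{N,s}|x|^{-2s}w$ in $\RR^N\setminus\{0\}$. For $u\in C_0^\infty(\Omega)$ write $u=wv$. As in \cite[Prop.~4.1]{frankJAMS2008},
\[
[u]^2_{\dot{H}^s(\RR^N)}-\lambda_{N,s}\int_{\RR^N}\frac{u^2}{|x|^{2s}}\,\dd x=\int_{\RR^N}\int_{\RR^N}\frac{|v(x)-v(y)|^2}{|x-y|^{N+2s}}\,w(x)w(y)\,\dd x\,\dd y=:J[v].
\]
It therefore suffices to prove the weighted Hardy-type inequality
\[
J[v]\gtrsim\int_\Omega \frac{v^2(x)\,|x|^{-N}}{\log^2(eD/|x|)}\,\dd x .
\]
Here we used $w^2|x|^{-2s}=|x|^{-N}$.

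\emph{Step 2: reduction to a logarithmic Hardy inequality.} Pass to the variables $t=\log(eD/|x|)\ge 1$ and $\theta=x/|x|$. Set
\[
J_{\mathrm{loc}}[v]=\iint_{|y|/2<|x|<2|y|}\frac{|v(x)-v(y)|^2}{|x-y|^{N+2s}}\,w(x)w(y)\,\dd x\,\dd y .
\]
On comparable dyadic shells, $w(x)w(y)|x-y|^{-N-2s}\gtrsim |x|^{-N}|x-y|^{-N}\cdot |x|^{2s}|x-y|^{-2s}$, so $J_{\mathrm{loc}}[v]$ dominates a nonlocal energy of $v$ in the cylindrical coordinates $(t,\theta)$ with a kernel that is integrable-tailed in $t$. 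Averaging over the angle, let $\bar v(t)$ be the mean of $v$ on the sphere of radius $eDe^{-t}$, and let $\bar v_k$ denote its value on the dyadic shell $A_k=\{2^{-k-1}<|x|/(eD)\le 2^{-k}\}$. One then aims for a discrete bound
\[
J[v]\gtrsim\sum_k |\bar v_k-\bar v_{k+1}|^2+\sum_k \int_{A_k}|v-\bar v_k|^2|x|^{-N}\,\dd x .
\]
The first term controls differences between adjacent shells. The second is a Poincaré-type term within each shell, which follows by rescaling $A_k$ to a fixed annulus and applying the fractional Poincaré inequality there.

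\emph{Step 3: one-dimensional Hardy inequality.} Since $v=0$ near $\partial\Omega$, we have $\bar v_k=0$ for $k\le k_0$. The discrete Hardy inequality
\[
\sum_k \frac{\bar v_k^2}{k^2}\lesssim \sum_k |\bar v_k-\bar v_{k+1}|^2,
\]
applied with $k\sim t\sim \log(eD/|x|)$, gives the logarithmic weight $\log^{-2}(eD/|x|)$. The shell Poincaré term absorbs the oscillation $v-\bar v_k$, because $1/\log^2\le 1$. A density argument then extends the inequality from $C_0^\infty(\Omega)$ to $\dot{H}^s_0(\Omega)$.

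The main obstacle is Step 2. One must show that the nonlocal, $w$-weighted energy genuinely dominates the discrete gradient $\sum_k|\bar v_k-\bar v_{k+1}|^2$, with constants uniform in $k$; this uniformity comes from the scaling invariance of $w(x)w(y)|x-y|^{-N-2s}$ under dilations. The constraint $s<N/2$ ensures $\alpha>0$ and that the ground state identity is valid, with the boundary terms at $0$ vanishing.
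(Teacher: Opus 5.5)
Your proposal is correct, and it takes a genuinely different route from the paper. The paper gives no proof of this proposition and simply imports it from \cite{TZIRAKIS20164513}. You instead derive it from three ingredients: the ground state representation of \cite[Prop.~4.1]{frankJAMS2008} (the same identity the paper uses in Section~\ref{sec:upper-bound} for the upper bound), a dyadic coarse-graining, and the discrete Hardy inequality.

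What your route buys is a short, self-contained proof with a constant depending only on $N$ and $s$. It also makes transparent why both bounds in Theorem~\ref{thm:main} are $|\log h|^{-2}$. After the substitution $v=|x|^{\frac{N-2s}{2}}u$, the Hardy deficit is a dilation-invariant nonlocal form, i.e.\ translation-invariant in $t=\log(1/|x|)$ with respect to $|x|^{-N}\dd x$. The lower bound is then the one-dimensional Hardy inequality in $t$, and the competitor $\log^{\alpha}(1/|x|)$ of Lemma~\ref{lem:minimizing-sequence} saturates it up to constants. Citing Tzirakis buys brevity and places the inequality in a broader family of sharp (trace) Hardy--Sobolev inequalities, but it leaves the mechanism behind the lower bound as a black box.

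You single out Step 2 as the main obstacle, but it is elementary and needs neither cylindrical coordinates nor a fractional Poincar\'e inequality. Take $r_k=eD2^{-k}$, $A_k=\{r_k/2<|x|\le r_k\}$ and $\bar v_k=|A_k|^{-1}\int_{A_k}v\,\dd x$. For $(x,y)\in (A_k\times A_{k+1})\cup(A_k\times A_k)$ one has $r_k/4<|x|,|y|\le r_k$ and $|x-y|\le 2r_k$. Hence $w(x)w(y)|x-y|^{-N-2s}\gtrsim r_k^{-2N}\sim(|A_k|\,|A_{k+1}|)^{-1}$, and Jensen's inequality gives, uniformly in $k$,
\[
|\bar v_k-\bar v_{k+1}|^2+\int_{A_k}|v-\bar v_k|^2\,\frac{\dd x}{|x|^N}\lesssim\iint_{(A_k\times A_{k+1})\cup(A_k\times A_k)}\frac{|v(x)-v(y)|^2}{|x-y|^{N+2s}}\,w(x)w(y)\,\dd x\,\dd y .
\]
These product sets are pairwise disjoint, so summing over $k$ gives your discrete bound. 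Moreover $\int_{A_k}|x|^{-N}\dd x=|S^{N-1}|\log 2$ does not depend on $k$, and $\log(eD/|x|)\ge k\log 2$ on $A_k$, so Step 3 goes through exactly as you wrote.

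Three small fixes:
\begin{enumerate}
\item $\bar v_k$ should be the average of $v$ over $A_k$, not a ``value'' of $\bar v(t)$.
\item $\bar v_k=0$ for $k\le 0$ holds because $A_k\cap B_D(0)=\emptyset$ for $k\le 0$ (since $e/2>1$) and $u$ vanishes outside $\Omega\subset B_D(0)$. It does not follow merely from $v$ vanishing near $\partial\Omega$.
\item For the density step, it is cleanest to work with $u\in C_0^\infty(\Omega\setminus\{0\})$, where the identity of \cite{frankJAMS2008} holds directly. Then use that this class is dense in $\dot{H}^s_0(\Omega)$ when $2s<N$, and that all three terms are continuous in $[\cdot]_{\dot{H}^s(\RR^N)}$ by the Hardy inequality itself.
\end{enumerate}
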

\begin{proof} (of the lower bound in Theorem \ref{thm:main}) Without loss of generality, we consider $0\in\Omega\subset\overline{\Omega}\subset \BB$. For $h<<1$, let $\TT_h$ be a regular and quasi-uniform triangulation of $\Omega$ and $\Omega_h\coloneqq\cup_{T\in\TT_h} T$. Let also $u_h\in V_h$ be a minimizer corresponding to $\lambda_{N,s,h}(\Omega)$ such that $\|u_h|x|^{-s}\|_{L^2(\Omega_h)}=1$.\\
Then 
\begin{equation}
\label{eq:lowerbound1}
\begin{aligned}
\lambda_{N,s,h}(\Omega)-\lambda_{N,s}&=[u_h]^2_{\dot{H}^s(\RR^N)}-\lambda_{N,s}\\
&\geq C_{N,s} \int_{\Omega_h} \frac{u_h^2(x)}{|x|^{2s}} \frac{1}{\log^2\left(\frac{eD}{|x|}\right)} \, \dd x\\
&\geq C_{N,s}\int_{\Omega_h} \frac{u_h^2(x)}{|x|^{2s}} \frac{1}{\log^2\left(\frac{e}{|x|}\right)} \, \dd x,
\end{aligned}
\end{equation}
since $D\leq 1$. Next,
\begin{equation*}
2\int_{\Omega_h} \frac{u_h^2(x)}{|x|^{2s}} \frac{1}{\log^2\left(\frac{e}{|x|}\right)} \, \dd x\geq \frac{1}{\log^2\left(\frac{e}{h}\right)}\int_{\Omega_h\setminus B_h(0)} \frac{u_h^2(x)}{|x|^{2s}} \, \dd x+\int_{B_{2h}(0)} \frac{u_h^2(x)}{|x|^{2s}\log^2\left(\frac{e}{|x|}\right)} \, \dd x.
\end{equation*}
Since the function $r\to r^s\log\left(\frac{e}{r}\right)$ is increasing near $r\simeq 0$, we get that
\begin{equation*}
\int_{B_{2h}(0)} \frac{u_h^2(x)}{|x|^{2s}\log^2\left(\frac{e}{|x|}\right)} \, \dd x\geq \frac{1}{(2h)^{2s} \log^2\left(\frac{e}{2h}\right)}\int_{B_{2h}(0)} u_h^2(x) \, \dd x.
\end{equation*}
Consequently, 
\begin{equation}
\label{eq:lowerbound4}
\begin{split}
\lambda_{N,s,h}(\Omega)-\lambda_{N,s}&\geq \frac{C_{N,s}}{2} \left(\frac{1}{\log^2\left(\frac{e}{h}\right)}\int_{\Omega_h\setminus B_h(0)} \frac{u_h^2(x)}{|x|^{2s}} \, \dd x + \frac{1}{(2h)^{2s} \log^2\left(\frac{e}{2h}\right)}\int_{B_{2h}(0)} u_h^2(x) \, \dd x\right)\\
&\geq \frac{C_{N,s}}{2^{2s+1}} \frac{1}{\log^2\left(\frac{e}{h}\right)} \left(\int_{\Omega_h\setminus B_h(0)} \frac{u_h^2(x)}{|x|^{2s}} \, \dd x + \frac{1}{h^{2s}}\int_{B_{2h}(0)} u_h^2(x) \, \dd x \right).
\end{split}
\end{equation}
If $s\neq \frac{1}{2}$ we use Lemma \ref{lem:hardy-type-inequality} with $r=h$, $\alpha=\frac{N-2s}{2} h^{2s-2}$ and $\beta=1$ and we get that
\begin{equation*}
\begin{aligned}
\frac{N-2s}{2} \int_{B_{h}(0)}\frac{u_h^2(x)}{|x|^{2s}} \, \dd x&\leq (N+1) h^{-2s} \int_{B_h(0)} u_h^2(x) \, \dd x\\
&+ h^{2-2s}\left(1+\frac{2}{N-2s}\right) \int_{B_{h}(0)} |D u_h(x)|^2 \, \dd x. 
\end{aligned}
\end{equation*} 
The inverse estimate from \cite[Theorem 4.5.11]{BrennerScott} tells us that
\begin{equation}
\label{eq:inverse-estimate}
\sum_{T\in \TT_h,\ T\subset B_{2h}(0)} \int_{T} |D u_h(x)|^2 \, \dd x\leq \frac{\tilde{C}}{h^2} \sum_{T\in \TT_h,\ T\subset B_{2h}(0)} \int_{T} u_h^2(x) \, \dd x
\end{equation}
for some constant $\tilde{C}=\tilde{C}_{\sigma,\rho}>0$.
Thus, we get:
\begin{equation*}
\begin{aligned}
\frac{N-2s}{2} \int_{B_{h}(0)}\frac{u_h^2(x)}{|x|^{2s}} \, \dd x&\leq (N+1) h^{-2s} \int_{B_h(0)} u_h^2(x) \, \dd x\\
&+\tilde{C} \left(1+\frac{2}{N-2s}\right) h^{-2s} \int_{B_{2h}(0)} u_h^2(x) \, \dd x.
\end{aligned}
\end{equation*}
This implies that
\begin{equation}
\label{eq:lower-bound-first-case}
h^{-2s}\int_{B_{2h}(0)} u_h^2(x)\, \dd x\gtrsim \int_{B_h(0)} \frac{u_h^2(x)}{|x|^{2s}} \, \dd x.
\end{equation}
Plugging \eqref{eq:lower-bound-first-case} into \eqref{eq:lowerbound4} and using the fact that $\|u_h |x|^{-s}\|_{L^2(\Omega_h)}=1$ we get the desired conclusion.

If $s=\frac{1}{2}$ we use Lemma \ref{lem:hardy-type-inequality-2} with $r=h$ and $u=u_h$ to obtain:
\begin{equation*}
(N-1) \int_{B_h(0)} \frac{u_h^2(x)}{|x|} \, \dd x\leq (N+2) h^{-1} \int_{B_h(0)} u_h^2(x) \, \dd x+2h\int_{B_h(0)} |Du_h(x)|^2 \, \dd x.
\end{equation*}
Using again the inverse estimate from \eqref{eq:inverse-estimate}, we get
\begin{equation}
\label{eq:lower-bound-second-case}
h^{-1} \int_{B_{2h}(0)} u_h^2(x) \, \dd x\gtrsim \int_{B_h(0)} \frac{u_h^2(x)}{|x|} \, \dd x.
\end{equation}
The proof in this case finishes by plugging \eqref{eq:lower-bound-second-case} into \eqref{eq:lowerbound4}.
The proof of the lower bound in Theorem \ref{thm:main} is now completed.
\end{proof}
\section{Proof of the upper bound in Theorem \ref{thm:main}}
\label{sec:upper-bound}
It is sufficient to prove the upper bound in Theorem \ref{thm:main} when $\Omega=\BB$. Indeed, given that the constant $\lambda_{N,s}$ is independent of the domain under consideration, the upper bound holds in the general case by comparison.
We will use an approximating sequence $u_\eps$ of $$\tilde{u}_1(x)=|x|^{-\frac{N-2s}{2}} \log^{\alpha}\left(\frac{1}{|x|}\right),$$
where $\alpha\geq 1$.
We consider the following cut-off function:
\begin{equation*}
\eta_\eps(x)=
\begin{cases}
&0, \quad \text{if}\ |x|\leq\eps^2,\\
&\xi\left(\frac{\log\left(\frac{|x|}{\eps^2}\right)}{\log\left(\frac{1}{\eps}\right)}\right), \quad \text{if}\ |x|\in (\eps^2,\eps),\\
&1, \quad \text{if}\ |x|\geq \eps,\\
\end{cases}
\end{equation*}
where $\xi:[0,1]\rightarrow [0,1]$ is a smooth function such that for some $\mu\in \left(0,\frac{1}{2}\right)$, $\xi=0$ on $[0,\mu]$ and $\xi=1$ on $[1-\mu,1]$.\\
The function $\eta_\eps$ satisfies the following properties:
\begin{enumerate}[label={\arabic*.}]
\item $\eta_\eps(x)=0,\quad \text{if}\ |x|\leq \eps^{2-\mu}$,\\
\item $\eta_\eps(x)=1, \quad \text{if}\ |x|\geq \eps^{1+\mu},$\\
\item $|\eta'_\eps(r)|\lesssim \frac{1}{r |\log\eps|}$ if $r\in (\eps^2,\eps)$ and vanishes otherwise, which implies that $|D\eta_\eps(x)|\leq \frac{1}{|x| |\log\eps|},\quad \text{if}\ |x|\in (\eps^2,\eps)$ and vanishes otherwise,\\
\item $|\eta''_\eps(r)|\lesssim \frac{1}{r^2 |\log\eps|}$ if $r\in (\eps^2,\eps)$ and vanishes otherwise, which implies that $|D^2\eta_\eps(x)|\leq \frac{1}{|x|^2 |\log\eps|},\quad \text{if}\ |x|\in (\eps^2,\eps)$ and vanishes otherwise.\\
\end{enumerate}
With this function we introduce $$u_\eps(x)=\tilde{u}_1(x) \eta_\eps(x)\psi(|x|)\in C^{\infty}_c(\BB),$$
where $\psi\in C^{\infty}_c(\RR)$ such that $\psi(r)=1$ if $|r|\leq \frac{1}{4}$ and $\psi(r)=0$ if $|r|\geq\frac{1}{2}$.\\

\vspace{0.5cm}

The following Lemma provides quantitative estimates for the sequence $u_\eps$, viewed as a minimizing sequence for the fractional Hardy constant: 
\begin{lemma}
\label{lem:minimizing-sequence}
Let $\alpha\geq 1$. The family of functions $u_\eps$ satisfies the following estimates:
\begin{equation}
\label{eq:aeps1}
A_\eps\coloneqq\int_{\RR^N} \int_{\RR^N} \frac{|u_\eps(x)-u_\eps(y)|^2}{|x-y|^{N+2s}}\, \dd x\, \dd y-\lambda_{N,s} \int_{\BB} \frac{u_\eps^2(x)}{|x|^{2s}}\, \dd x \lesssim |\log\eps|^{2\alpha-1}
\end{equation} 
and 
\begin{equation}
\label{eq:ceps2}
B_\eps\coloneqq\int_{\BB} \frac{u_\eps^2(x)}{|x|^{2s}} \sim |\log\eps|^{2\alpha+1}.
\end{equation}
\end{lemma}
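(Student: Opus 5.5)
The plan is to treat $B_\eps$ by a direct computation in polar coordinates, and $A_\eps$ through the ground state representation of \cite[Prop.~4.1]{frankJAMS2008}.

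\emph{The estimate for $B_\eps$.} In polar coordinates,
\[
B_\eps=|\mathbb S^{N-1}|\int_0^{1/2} r^{-N+2s-2s}\log^{2\alpha}(1/r)\,\eta_\eps^2(r)\psi^2(r)\,r^{N-1}\,\dd r .
\]
The integrand equals $r^{-1}\log^{2\alpha}(1/r)\eta_\eps^2\psi^2$. It is dominated by $\int_{\eps^2}^{1/2} r^{-1}\log^{2\alpha}(1/r)\,\dd r\lesssim |\log\eps|^{2\alpha+1}$. It is bounded below by $\int_{\eps}^{1/4}$ of the same quantity, which is $\sim|\log\eps|^{2\alpha+1}$.

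\emph{The estimate for $A_\eps$.} Write $u_\eps=w\,u_1$ with $u_1=|x|^{-\frac{N-2s}{2}}$ and
\[
w(x)=\log^\alpha(1/|x|)\,\eta_\eps(x)\psi(|x|).
\]
The function $w$ is smooth and compactly supported away from the origin. Since $u_1$ satisfies $(-\Delta)^s u_1=\lambda_{N,s}|x|^{-2s}u_1$ (up to the normalization constant of the seminorm, which is exactly how $\lambda_{N,s}$ is defined), the ground state representation gives
\[
A_\eps=\int\!\!\int\frac{|w(x)-w(y)|^2}{|x-y|^{N+2s}}\,u_1(x)u_1(y)\,\dd x\,\dd y .
\]
It remains to bound this weighted energy by $|\log\eps|^{2\alpha-1}$.

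\emph{Splitting the weighted energy.} I split the double integral into two regions.
\begin{itemize}
\item[(i)] The region $|x-y|\le |x|/2$. Here $|y|\sim|x|$, so $u_1(y)\sim u_1(x)$. By the mean value theorem, $|w(x)-w(y)|\le |x-y|\sup_{B(x,|x|/2)}|Dw|$. We have
\[
|Dw(z)|\lesssim |z|^{-1}\log^{\alpha-1}(1/|z|)\,\chi_{\{|z|\ge\eps^2\}}+|z|^{-1}\log^\alpha(1/|z|)|\log\eps|^{-1}\chi_{\{\eps^2<|z|<\eps\}}+\chi_{\{1/4\le|z|\le 1/2\}}.
\]
On the transition annulus $\log(1/|z|)\sim|\log\eps|$, so the second term is of the same size as the first. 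Integrating $|x-y|^{2-N-2s}$ over $|x-y|<|x|/2$ gives $|x|^{2-2s}$, so this region contributes
\[
\lesssim\int u_1^2(x)\,|x|^{2-2s}|x|^{-2}\log^{2\alpha-2}(1/|x|)\,\dd x\sim\int_{\eps^2/2}^{1}r^{-1}\log^{2\alpha-2}(1/r)\,\dd r\lesssim|\log\eps|^{2\alpha-1}.
\]
\item[(ii)] The region $|x-y|>|x|/2$. By symmetry we may also assume $|x|\le 2|y|$, i.e.\ the symmetric far region where $|x-y|\gtrsim\max(|x|,|y|)$. I bound $|w(x)-w(y)|^2\le 2w(x)^2+2w(y)^2$ only in the genuinely far configurations. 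In the remaining ones I instead use the logarithmic mean value estimate
\[
|w(x)-w(y)|\lesssim \log^{\alpha-1}\!\Big(\tfrac{1}{\min(|x|,|y|)}\Big)\,\Big|\log\tfrac{|x|}{|y|}\Big|+\text{(cut-off terms)},
\]
which follows because $w$ is, up to the cut-offs, a function of $t=\log(1/r)$ with derivative $\lesssim t^{\alpha-1}$. Reducing to radial variables $r=|x|$, $\rho=|y|$ and integrating the kernel over angles gives $\int |x-y|^{-N-2s}\,\dd\sigma\lesssim \max(r,\rho)^{-N-2s}$ when $r/\rho$ is away from $1$. Setting $\rho=r\tau$, the contribution becomes
\[
\int \frac{\dd r}{r}\,\log^{2\alpha-2}(1/r)\int \frac{\tau^{\frac{N+2s}{2}-1}|\log\tau|^2}{\max(1,\tau)^{N+2s}}\,\dd\tau .
\]
The $\tau$-integral converges at $0$ and at $\infty$ precisely because $s<N/2$ and $s>0$, and the outer integral over $r\in(\eps^2,1)$ is again $\lesssim|\log\eps|^{2\alpha-1}$. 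The pieces coming from the $\psi$ cut-off are $O(1)$.
\end{itemize}

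\emph{Main obstacle.} I expect the difficulty to lie in justifying the ground state representation for this $u_1$ in every dimension $N\ge1$ with $s<\min\{1,N/2\}$. Beyond that, region (ii) has to be handled carefully: when $r$ and $\rho$ differ by large factors, a naive bound by $w^2$ loses a power of the logarithm. This forces the use of the logarithmic difference $|\log(r/\rho)|$ together with the $\tau$-integrability provided by the weights $u_1(x)u_1(y)$. For the representation itself, I would first try the standard route: the pointwise identity $(-\Delta)^s u_1=\lambda_{N,s}|x|^{-2s}u_1$, valid for $0<2s<N$, combined with expanding $|w(x)u_1(x)-w(y)u_1(y)|^2$ and symmetrizing.
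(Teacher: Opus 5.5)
Your computation of $B_\eps$ and your reduction of $A_\eps$ to the weighted energy of $w=\theta_\eps$ via \cite[Prop.~4.1]{frankJAMS2008} are exactly what the paper does. The paper also simply cites that identity, so it is not an additional obstacle for you. Your near region $|x-y|\le|x|/2$ is handled by the mean value theorem, as in the paper's $J_{1,1}$.

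The genuine difference is in the far region. The paper splits according to the support of $\theta_\eps$ into $J_1,J_2,J_3$. It controls differences only through the H\"older-derived power bound $|\theta_\eps(x)-\theta_\eps(y)|^2\lesssim|\log\eps|^{2\alpha-2}(|x|/|y|)^{1/2}$. It then needs a cascade of subcases (the auxiliary exponent $\delta$, the polynomial gains $\eps^{\delta(\frac N2+s)}$ and $\eps^{\mu(\frac N2+s)}$) to avoid losing a logarithm near the inner edge of the support.

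You instead use the dilation invariance of $|x-y|^{-N-2s}|x|^{-\frac{N-2s}{2}}|y|^{-\frac{N-2s}{2}}\,\dd x\,\dd y$. In the far region $|x-y|>\frac14\max(|x|,|y|)$ holds pointwise, so no angular analysis is needed. The substitution $|y|=\tau|x|$ then reduces everything to $\frac{\dd r}{r}$ against the fixed integrable kernel $\tau^{\frac{N+2s}{2}-1}|\log\tau|^2\max(1,\tau)^{-N-2s}$. One computation thus replaces $J_{1,2}$, $J_2$ and $J_3$, and it shows the rate comes from $w$ being $|\log\eps|^{\alpha-1}$-Lipschitz in $\log|x|$ over a logarithmic interval of length $\sim|\log\eps|$. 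The paper's route, in exchange, uses only elementary power-type estimates.

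As written, one step needs tightening. You use the factor $\log^{\alpha-1}(1/\min(|x|,|y|))$ and your displayed product formula restricts $r$ to $(\eps^2,1)$. This omits pairs whose smaller point lies in $B_{\eps^{2-\mu}}(0)$ while the other lies in the support (the paper's $J_2$), where $w(x)-w(y)=-w(y)\neq0$.
\begin{itemize}
\item Letting $r$ run down to $0$ in your decoupled product diverges, since $\int_0 r^{-1}\log^{2\alpha-2}(1/r)\,\dd r=\infty$.
\item Bounding these pairs by the crude $w(y)^2\le\log^{2\alpha}(1/|y|)$ gives only $|\log\eps|^{2\alpha}$. This is exactly the configuration behind the paper's subcases.
\end{itemize}

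The fix is what your ``cut-off terms'' should say. The derivative $w'$ vanishes outside $(\eps^{2-\mu},\frac12)$, and $r|w'(r)|\lesssim|\log\eps|^{\alpha-1}$ there. Hence $|w(x)-w(y)|\lesssim|\log\eps|^{\alpha-1}|\log(|x|/|y|)|$ for all $x,y\neq0$. Moreover, for each fixed $\tau$, the set of $r$ with $r$ or $r\tau$ in $[\eps^{2-\mu},\frac12]$ has $\frac{\dd r}{r}$-measure at most $2\log(\eps^{\mu-2}/2)$. Together these bound the whole far region by $|\log\eps|^{2\alpha-1}$ at once, and make the ``genuinely far'' $w^2$ bound unnecessary.

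Finally, the $\tau$-integral converges for every $N\ge1$ and $s>0$: the exponents at $0$ and $\infty$ are $\frac{N+2s}{2}-1$ and $-\frac{N+2s}{2}-1$. The restriction $s<\frac N2$ is used only in the ground state representation.
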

\begin{remark}
As a consequence, we get that 
\begin{equation}
\frac{A_\eps}{B_\eps} \lesssim \frac{1}{|\log\eps|^2}.
\end{equation}
\end{remark}
\begin{proof}
Note that $u_\eps(x)=0$ if $|x|\geq\frac{1}{2}$ or if $|x|\leq \eps^{2-\mu}$. Then, since $|\eta_\eps(x)|,|\psi(|x|)|\leq 1$ for any $x\in \BB$, we get that $$B_\eps\lesssim \int_{\eps^{2-\mu}}^{\frac{1}{2}} \frac{|\log r|^{2\alpha}}{r}\, \dd r\lesssim |\log\eps|^{2\alpha+1}$$
and $$B_\eps\gtrsim \int_{\eps}^{\frac{1}{4}} \frac{|\log r|^{2\alpha}}{r}\, \dd r\gtrsim |\log\eps|^{2\alpha+1}.$$
Then the estimate in \eqref{eq:ceps2} holds.\\
To prove the inequality $\eqref{eq:aeps1}$ we use the ground state representation proved in \cite[Prop.~4.1]{frankJAMS2008} and we get that 

\begin{equation}
\label{eq:representation-for-ueps}
A_\eps= \int_{\RR^N}\int_{\RR^N} \frac{|\theta_\eps(x)-\theta_\eps(y)|^2}{|x-y|^{N+2s}} \frac{\, \dd x\, \dd y}{|x|^{\frac{N-2s}{2}} |y|^{\frac{N-2s}{2}}},
\end{equation}
where $$\theta_\eps(x)=u_\eps(x) |x|^{\frac{N-2s}{2}}=\log^{\alpha}\left(\frac{1}{|x|}\right) \eta_\eps(x) \psi(|x|).$$
Note that $\theta_\eps(x)=0$ if $|x|\geq \frac{1}{2}$ or if $|x|\leq \eps^{2-\mu}$. Also, since $$D\theta_\eps(x)=D\eta_\eps(x) \log^{\alpha}\left(\frac{1}{|x|}\right) \psi(|x|)-\alpha\log^{\alpha-1}\left(\frac{1}{|x|}\right) \eta_{\eps}(x) \frac{x}{|x|^2} \psi(|x|)+\log^{\alpha}\left(\frac{1}{|x|}\right)\eta_\eps(x) \psi'(|x|) \frac{x}{|x|}$$ we get:
\begin{equation}
%\label{eq:grad-theta-eps}
|D\theta_\eps(x)|\lesssim 
\begin{cases}
& \frac{1}{|x|} |\log\eps|^{\alpha-1}, \quad \text{if}\ |x|\in (\eps^2,\eps),\\
&\frac{1}{|x|} \log^{\alpha-1}\left(\frac{1}{|x|}\right), \quad \text{if}\ |x|\in \left[\eps,\frac{1}{4}\right],\\
&1, \quad \text{if}\ |x|\in \left(\frac{1}{4},\frac{1}{2}\right).
\end{cases}
\end{equation}
Thus, 
\begin{equation}
\label{eq:grad-theta-eps}
|D\theta_\eps(x)|\lesssim \frac{1}{|x|} \log^{\alpha-1}\left(\frac{1}{|x|}\right)\lesssim  \frac{1}{|x|} |\log\eps|^{\alpha-1},
\end{equation}
for any $x\in \RR^N$ with $|x|\in \left(\eps^2,\frac{1}{2}\right)$.

Let $x,y\in\RR^N$ with $\eps^2<|y|\leq |x|<\frac{1}{2}$ and $p=\frac{4}{3}$. Using H\"{o}lder's Inequality and \eqref{eq:grad-theta-eps}, we can bound the difference $|\theta_\eps(x)-\theta_\eps(y)|$ as follows:
\begin{equation*}
\begin{aligned}
|\theta_\eps(x)-\theta_\eps(y)|&\leq\|\theta_\eps'\|_{L^p(|y|,|x|)} \|1\|_{L^{\frac{p}{p-1}}(|y|,|x|)}\\
&\lesssim |x|^{1-\frac{1}{p}} \left(\int_{|y|}^{|x|} \frac{1}{r^p}  |\log \eps|^{(\alpha-1)p} \, \dd r\right)^{\frac{1}{p}}\\
&\lesssim |\log \eps|^{\alpha-1} \left(\frac{|x|}{|y|}\right)^{1-\frac{1}{p}}.
\end{aligned}
\end{equation*}
Therefore,
\begin{equation}
\label{eq:diff-1}
|\theta_\eps(x)-\theta_\eps(y)|^2\lesssim |\log\eps|^{2\alpha-2} \left(\frac{|x|}{|y|}\right)^{\frac{1}{2}}.
\end{equation}

We split the integral in equality \eqref{eq:representation-for-ueps} in three pieces:
\begin{equation}
\label{eq:J1}
J_1\coloneqq \int_{B_{\frac{1}{2}}(0)\setminus B_{\eps^{2-\mu}}(0)} \int_{B_{\frac{1}{2}}(0)\setminus B_{\eps^{2-\mu}}(0)} \frac{|\theta_\eps(x)-\theta_\eps(y)|^2}{|x-y|^{N+2s}} \frac{\, \dd x\, \dd y}{|x|^{\frac{N-2s}{2}} |y|^{\frac{N-2s}{2}}},
\end{equation} 
\begin{equation}
\label{eq:J2}
J_2\coloneqq 2\int_{B_{\frac{1}{2}}(0)\setminus B_{\eps^{2-\mu}}(0)} \int_{B_{\eps^{2-\mu}}(0)} \frac{|\theta_\eps(x)|^2}{|x-y|^{N+2s}} \frac{\, \dd x\, \dd y}{|x|^{\frac{N-2s}{2}} |y|^{\frac{N-2s}{2}}}, 
\end{equation}
and 
\begin{equation}
\label{eq:J3}
J_3\coloneqq 2\int_{B^{c}_{\frac{1}{2}}(0)} \int_{B_{\frac{1}{2}}(0)\setminus B_{\eps^{2-\mu}}(0)} \frac{|\theta_\eps(y)|^2}{|x-y|^{N+2s}} \frac{\, \dd y\, \dd x}{|x|^{\frac{N-2s}{2}} |y|^{\frac{N-2s}{2}}}.
\end{equation}
We claim that 
\begin{equation}
\label{eq:estimateJ1}
J_1\lesssim |\log \eps|^{2\alpha-1},
\end{equation}
\begin{equation}
\label{eq:estimateJ2}
J_2\lesssim |\log \eps|^{2\alpha-2},
\end{equation}
and 
\begin{equation}
\label{eq:estimateJ3}
J_3\lesssim 1.
\end{equation}
These three estimates and identity \eqref{eq:representation-for-ueps} prove \eqref{eq:aeps1}.

We now prove the above claims.

\textit{Step I. Proof of \eqref{eq:estimateJ1}}
Without loss of generality we suppose that $|x|\geq |y|$. We split $J_1$ in two pieces:
\begin{equation}
\label{eq:J11}
J_{1,1}\coloneqq \int_{B_{\frac{1}{2}}(0)\setminus B_{\eps^{2-\mu}}(0)} \int_{B_{\frac{1}{2}}(0)\setminus B_{\eps^{2-\mu}}(0), |x-y|\leq \frac{|x|}{2},\ |x|\geq |y|} \frac{|\theta_\eps(x)-\theta_\eps(y)|^2}{|x-y|^{N+2s}} \frac{\, \dd x\, \dd y}{|x|^{\frac{N-2s}{2}} |y|^{\frac{N-2s}{2}}},
\end{equation}
and
\begin{equation}
\label{eq:J12}
J_{1,2}\coloneqq \int_{B_{\frac{1}{2}}(0)\setminus B_{\eps^{2-\mu}}(0)} \int_{B_{\frac{1}{2}}(0)\setminus B_{\eps^{2-\mu}}(0), |x-y|>\frac{|x|}{2},\ |x|\geq |y|} \frac{|\theta_\eps(x)-\theta_\eps(y)|^2}{|x-y|^{N+2s}} \frac{\, \dd x\, \dd y}{|x|^{\frac{N-2s}{2}} |y|^{\frac{N-2s}{2}}}.
\end{equation} 
We estimate $J_{1,1}$ as follows:
\begin{equation*}
J_{1,1}\lesssim \int_{B_{\frac{1}{2}}(0)\setminus B_{\eps^{2-\mu}}(0)} \int_{B_{\frac{1}{2}}(0)\setminus B_{\eps^{2-\mu}}(0), |x-y|\leq\frac{|x|}{2},\ |x|\geq |y|} \frac{|D\theta_\eps(z)|^2}{|x-y|^{N+2s-2}} \frac{\, \dd x\, \dd y}{|x|^{\frac{N-2s}{2}} |y|^{\frac{N-2s}{2}}},
\end{equation*}  
where $z=tx+(1-t)y$ for some $t\in [0,1]$. Observe that, since $|x-y|\leq \frac{|x|}{2}$, we get that $|x|-|y| \leq |x-y| \leq \frac{|x|}{2}$, which implies $\frac{|x|}{2}\leq |y|\leq |x|$ and then $\eps^2<\frac{|x|}{2}\leq |x|-(1-t)|x-y|\leq |z|\leq |x|<\frac{1}{2}$. Using \eqref{eq:grad-theta-eps}, we get that $$|D\theta_\eps(z)|\lesssim |\log\eps|^{\alpha-1} \frac{1}{|z|}\lesssim |\log\eps|^{\alpha-1} \frac{1}{|x|}$$ and then,
\begin{equation}
\label{eq:estimate-J11}
\begin{aligned}
J_{1,1} &\lesssim |\log\eps|^{2\alpha-2}  \int_{B_{\frac{1}{2}}(0)\setminus B_{\eps^{2-\mu}}(0)} \int_{B_{\frac{1}{2}}(0)\setminus B_{\eps^{2-\mu}}(0)\, |x-y|\leq\frac{|x|}{2},\ |x|\geq |y|} \frac{1}{|x-y|^{N+2s-2}} \frac{1}{|x|^{N-2s+2}} \, \dd x\, \dd y\\
&\lesssim  |\log\eps|^{2\alpha-2}   \int_{B_{\frac{1}{2}}(0)\setminus B_{\eps^{2-\mu}}(0)} \frac{1}{|x|^{N-2s+2}} \int_{|w|\leq \frac{|x|}{2}} \frac{1}{|w|^{N+2s-2}} \, \dd w\, \dd x\\
&\lesssim  |\log\eps|^{2\alpha-2} \int_{B_{\frac{1}{2}}(0)\setminus B_{\eps^{2-\mu}}(0)} \frac{1}{|x|^N} \, \dd x\\
&\lesssim |\log\eps|^{2\alpha-1}.
\end{aligned}
\end{equation}
In order to estimate $J_{1,2}$ we consider two cases: if $|x|\geq 3|y|$, then $\frac{4|x|}{3}\geq |x|+|y|\geq |x-y|\geq |x|-|y|\geq \frac{2|x|}{3}$ so that $|x-y|\sim |x|$. In this case we get:
\begin{equation}
\label{eq:x-3y-1}
\begin{aligned}
&\int_{B_{\frac{1}{2}}(0)\setminus B_{\eps^{2-\mu}}(0)} \int_{B_{\frac{1}{2}}(0)\setminus B_{\eps^{2-\mu}}(0), |x-y|>\frac{|x|}{2}, |x|\geq 3|y|} \frac{|\theta_\eps(x)-\theta_\eps(y)|^2}{|x-y|^{N+2s}} \frac{\, \dd x\, \dd y}{|x|^{\frac{N-2s}{2}} |y|^{\frac{N-2s}{2}}}\\
&\lesssim \int_{B_{\frac{1}{2}}(0)\setminus B_{\eps^{2-\mu}}(0)} \int_{B_{\frac{1}{2}}(0)\setminus B_{\eps^{2-\mu}}(0), |x-y|>\frac{|x|}{2}, |x|\geq 3|y|} \frac{|\theta_\eps(x)-\theta_\eps(y)|^2}{|x|^{\frac{3N}{2}+s} |y|^{\frac{N-2s}{2}}} \, \dd x\, \dd y.
\end{aligned}
\end{equation}
Using \eqref{eq:diff-1} we get:
\begin{equation*}
\begin{aligned}
&\int_{B_{\frac{1}{2}}(0)\setminus B_{\eps^{2-\mu}}(0)} \int_{B_{\frac{1}{2}}(0)\setminus B_{\eps^{2-\mu}}(0), |x-y|>\frac{|x|}{2}, |x|\geq 3|y|} \frac{|\theta_\eps(x)-\theta_\eps(y)|^2}{|x|^{\frac{3N}{2}+s} |y|^{\frac{N-2s}{2}}} \, \dd x\, \dd y\\
& \lesssim |\log\eps|^{2\alpha-2} \int_{B_{\frac{1}{2}}(0)\setminus B_{\eps^{2-\mu}}(0)} \int_{B_{\frac{1}{2}}(0)\setminus B_{\eps^{2-\mu}}(0), |x-y|>\frac{|x|}{2}, |x|\geq 3|y|}  \frac{1}{|x|^{\frac{3N}{2}+s-\frac{1}{2}}} \frac{1}{|y|^{\frac{N-2s}{2}+\frac{1}{2}}} \, \dd x\, \dd y\\
&\lesssim |\log\eps|^{2\alpha-2} \int_{B_{\frac{1}{2}}(0)\setminus B_{\eps^{2-\mu}}(0)} \frac{1}{|x|^{\frac{3N}{2}+s-\frac{1}{2}}} \int_{\eps^{2-\mu}}^{\frac{|x|}{3}} t^{\frac{N}{2}+s-\frac{3}{2}} \, \dd t \, \dd x \\
&\lesssim |\log\eps|^{2\alpha-2}  \int_{B_{\frac{1}{2}}(0)\setminus B_{\eps^{2-\mu}}(0)} \frac{1}{|x|^N} \, \dd x\\
& \lesssim |\log\eps|^{2\alpha-1}.
\end{aligned}
\end{equation*}
Thus,
\begin{equation}
\label{eq:estimate-J121}
\int_{B_{\frac{1}{2}}(0)\setminus B_{\eps^{2-\mu}}(0)} \int_{B_{\frac{1}{2}}(0)\setminus B_{\eps^{2-\mu}}(0), |x-y|>\frac{|x|}{2}, |x|\geq 3|y|} \frac{|\theta_\eps(x)-\theta_\eps(y)|^2}{|x-y|^{N+2s}} \frac{\, \dd x\, \dd y}{|x|^{\frac{N-2s}{2}} |y|^{\frac{N-2s}{2}}}\lesssim |\log\eps|^{2\alpha-1}.
\end{equation}
If $|x|<3|y|$, since $|x-y|>\frac{|x|}{2}$, we get that $|x|\sim |y|\sim |x-y|$. We obtain:
\begin{equation*}
\begin{aligned}
&\int_{B_{\frac{1}{2}}(0)\setminus B_{\eps^{2-\mu}}(0)} \int_{B_{\frac{1}{2}}(0)\setminus B_{\eps^{2-\mu}}(0), |x-y|>\frac{|x|}{2},\ |y|\leq |x|<3|y|} \frac{|\theta_\eps(x)-\theta_\eps(y)|^2}{|x-y|^{N+2s}} \frac{\, \dd x\, \dd y}{|x|^{\frac{N-2s}{2}} |y|^{\frac{N-2s}{2}}}\\
&\lesssim\int_{B_{\frac{1}{2}}(0)\setminus B_{\eps^{2-\mu}}(0)} \int_{B_{\frac{1}{2}}(0)\setminus B_{\eps^{2-\mu}}(0), |x-y|>\frac{|x|}{2},\ |y|\leq |x|<3|y|} \frac{|\theta_\eps(x)-\theta_\eps(y)|^2}{|x|^N |y|^N} \, \dd x\, \dd y.
\end{aligned}
\end{equation*}
Using \eqref{eq:diff-1} we get the following estimate:
\begin{equation}
\label{eq:estimate-J122}
\begin{aligned}
&\int_{B_{\frac{1}{2}}(0)\setminus B_{\eps^{2-\mu}}(0)} \int_{B_{\frac{1}{2}}(0)\setminus B_{\eps^{2-\mu}}(0), |x-y|>\frac{|x|}{2}, |y|\leq |x|<3|y|} \frac{|\theta_\eps(x)-\theta_\eps(y)|^2}{|x|^N |y|^N} \, \dd x\, \dd y\\
&\lesssim |\log\eps|^{2\alpha-2} \int_{B_{\frac{1}{2}}(0)\setminus B_{\eps^{2-\mu}}(0)} \frac{1}{|x|^{N}} \int_{\frac{|x|}{3}<|y|\leq |x|} \frac{1}{|y|^N} \, \dd y\, \dd x\\
&\lesssim |\log\eps|^{2\alpha-2} \int_{B_{\frac{1}{2}}(0)\setminus B_{\eps^{2-\mu}}(0)} \frac{1}{|x|^N} \, \dd x\\
&\lesssim |\log\eps|^{2\alpha-1}.
\end{aligned}
\end{equation}
The first claim now follows from \eqref{eq:estimate-J11}, \eqref{eq:estimate-J121} and \eqref{eq:estimate-J122}.

 \textit{Step II. Proof of 
\eqref{eq:estimateJ2}.}
We split $J_2$ in two pieces:
\begin{equation*}
J_{2,1}\coloneqq 2\int_{B_{\frac{1}{2}}(0)\setminus B_{\eps^{2-\mu}}(0)} \int_{B_{\eps^{2-\mu}}(0), |x-y|\leq\frac{|y|}{2}}  \frac{|\theta_\eps(x)|^2}{|x-y|^{N+2s}} \frac{\, \dd x\, \dd y}{|x|^{\frac{N-2s}{2}} |y|^{\frac{N-2s}{2}}},
\end{equation*}
and
\begin{equation*}
J_{2,2}\coloneqq 2\int_{B_{\frac{1}{2}}(0)\setminus B_{\eps^{2-\mu}}(0)} \int_{B_{\eps^{2-\mu}}(0), |x-y|>\frac{|y|}{2}}  \frac{|\theta_\eps(x)|^2}{|x-y|^{N+2s}} \frac{\, \dd x\, \dd y}{|x|^{\frac{N-2s}{2}} |y|^{\frac{N-2s}{2}}}.
\end{equation*}
In order to estimate $J_{2,1}$ first observe that, since $|x-y|\leq\frac{|y|}{2}$, we get that $|x|\leq \frac{3|y|}{2}\leq \frac{3\eps^{2-\mu}}{2}$ and $|y|\geq \frac{2|x|}{3}\geq \frac{2\eps^{2-\mu}}{3}$. Thus, $|x|\sim|y|\sim\eps^{2-\mu}$. Then:
\begin{equation*}
\begin{split}
J_{2,1}&\lesssim \int_{B_{\frac{3\eps^{2-\mu}}{2}}(0)\setminus B_{\eps^{2-\mu}}(0)} \int_{B_{\eps^{2-\mu}}(0)\setminus B_{\frac{2\eps^{2-\mu}}{3}}(0), |x-y|\leq\frac{|y|}{2}} \frac{|\theta_\eps(x)-\theta_\eps(y)|^2}{|x-y|^{N+2s}}  \frac{\, \dd x\, \dd y}{|x|^{\frac{N-2s}{2}} |y|^{\frac{N-2s}{2}}}\\
&\lesssim \int_{B_{\frac{3\eps^{2-\mu}}{2}}(0)\setminus B_{\eps^{2-\mu}}(0)} \int_{B_{\eps^{2-\mu}}(0)\setminus B_{\frac{2\eps^{2-\mu}}{3}}(0), |x-y|\leq\frac{|y|}{2}} \frac{|D\theta_\eps(z)|^2}{|x-y|^{N+2s-2}}  \frac{\, \dd x\, \dd y}{|x|^{\frac{N-2s}{2}} |y|^{\frac{N-2s}{2}}},
\end{split}
\end{equation*}
where $z=tx+(1-t)y$ for some $t\in [0,1]$. Since $|x-y|\leq \frac{|y|}{2}$, we get that $\frac{\eps^{2-\mu}}{3}\leq\frac{|y|}{2}\leq |y|-t|x-y|\leq|z|\leq |x|+|y|\leq \frac{5\eps^{2-\mu}}{2}$. Thus, $|z|\sim \eps^{2-\mu}$. Using \eqref{eq:grad-theta-eps} we get that $$|D\theta_\eps(z)|\lesssim \frac{|\log\eps|^{\alpha-1}}{\eps^{2-\mu}}.$$ Thus,
\begin{equation}
\label{eq:estimate-on-J21}
\begin{aligned}
J_{2,1}&\lesssim \frac{|\log\eps|^{2\alpha-2}}{\eps^{2(2-\mu)}} \int_{B_{\eps^{2-\mu}}(0)\setminus B_{\frac{2\eps^{2-\mu}}{3}}(0)} \frac{1}{|y|^{N-2s}} \int_{|x-y|\leq \frac{|y|}{2}} \frac{1}{|x-y|^{N+2s-2}} \, \dd x\, \dd y\\
&\lesssim \frac{|\log\eps|^{2\alpha-2}}{\eps^{2(2-\mu)}}  \int_{B_{\eps^{2-\mu}}(0)\setminus B_{\frac{2\eps^{2-\mu}}{3}}(0)} \frac{1}{|y|^{N-2}} \, \dd y\\
&\lesssim |\log\eps|^{2\alpha-2}.
\end{aligned}
\end{equation}
We estimate $J_{2,2}$ by considering two cases:\\
\textbf{Case I.} If $|x|<2|y|$, then $|x|< 2\eps^{2-\mu}$ and $|y|> \frac{\eps^{2-\mu}}{2}$. Since $|x-y|>\frac{|y|}{2}$, we obtain $|x|\sim |y|\sim |x-y|\sim \eps^{2-\mu}$ and, thus
\begin{equation}
\label{eq:x-2y-1}
\begin{aligned}
&\int_{B_{\frac{1}{2}}(0)\setminus B_{\eps^{2-\mu}}(0)} \int_{B_{\eps^{2-\mu}}(0), |x-y|>\frac{|y|}{2}, |x|<2|y|}  \frac{|\theta_\eps(x)|^2}{|x-y|^{N+2s}} \frac{\, \dd x\, \dd y}{|x|^{\frac{N-2s}{2}} |y|^{\frac{N-2s}{2}}}\\
&\quad \lesssim \eps^{-2(2-\mu)N}\int_{B_{2\eps^{2-\mu}}(0)\setminus B_{\eps^{2-\mu}}(0)} \int_{B_{\eps^{2-\mu}}(0)\setminus B_{\frac{\eps^{2-\mu}}{2}}(0), |x-y|>\frac{|y|}{2}} |\theta_\eps(x)-\theta_\eps(y)|^2 \, \dd x\, \dd y.
\end{aligned}
\end{equation}
Using \eqref{eq:diff-1} we obtain:
\begin{equation*}
\begin{aligned}
\eps^{-2(2-\mu)N}&\int_{B_{2\eps^{2-\mu}}(0)\setminus B_{\eps^{2-\mu}}(0)} \int_{B_{\eps^{2-\mu}}(0)\setminus B_{\frac{\eps^{2-\mu}}{2}}(0), |x-y|>\frac{|y|}{2}} |\theta_\eps(x)-\theta_\eps(y)|^2 \, \dd x\, \dd y\\
&  \lesssim \eps^{-2(2-\mu)N} |\log\eps|^{2\alpha-2} \int_{B_{2\eps^{2-\mu}}(0)\setminus B_{\eps^{2-\mu}}(0)} \int_{B_{\eps^{2-\mu}}(0)\setminus B_{\frac{\eps^{2-\mu}}{2}}(0), |x-y|>\frac{|y|}{2}} 1 \, \dd x\, \dd y\\
&\lesssim |\log\eps|^{2\alpha-2}.
\end{aligned}
\end{equation*}
Thus,
\begin{equation}
\label{eq:estimateJ221}
\int_{B_{\frac{1}{2}}(0)\setminus B_{\eps^{2-\mu}}(0)} \int_{B_{\eps^{2-\mu}}(0), |x-y|>\frac{|y|}{2}, |x|<2|y|}  \frac{|\theta_\eps(x)|^2}{|x-y|^{N+2s}} \frac{\, \dd x\, \dd y}{|x|^{\frac{N-2s}{2}} |y|^{\frac{N-2s}{2}}}\lesssim |\log\eps|^{2\alpha-2}.
\end{equation}
\textbf{Case II.} If $|x|\geq 2|y|$, then $|x-y|\geq \frac{|x|}{2}$. We choose a real number $0<\delta<1-\mu$. There are three cases:\\
\textbf{Case II.1.} If $|x|\geq \eps^{2-\mu-\delta}$, then
\begin{equation}
\label{eq:estimateJ222}
\begin{aligned}
&\int_{B_{\frac{1}{2}}(0)\setminus B_{\eps^{2-\mu}}(0)} \int_{B_{\eps^{2-\mu}}(0), |x-y|>\frac{|y|}{2},\ |x|\geq 2|y|,\ |x|\geq \eps^{2-\mu-\delta}} \frac{|\theta_\eps(x)|^2}{|x-y|^{N+2s}} \frac{\, \dd x\, \dd y}{|x|^{\frac{N-2s}{2}} |y|^{\frac{N-2s}{2}}}\\
&\lesssim \int_{B_{\frac{1}{2}}(0)\setminus B_{\eps^{2-\mu-\delta}}(0)} \int_{B_{\eps^{2-\mu}}(0)}  \log^{2\alpha}\left(\frac{1}{|x|}\right) \frac{1}{|x|^{\frac{3N}{2}+s} |y|^{\frac{N-2s}{2}}}\, \dd x\, \dd y\\
&\lesssim |\log\eps|^{2\alpha} \int_{\eps^{2-\mu-\delta}}^{\frac{1}{2}} r^{-\frac{N}{2}-s-1}\, \dd r \int_{0}^{\eps^{2-\mu}} t^{\frac{N}{2}+s-1} \, \dd t\\
&\lesssim |\log\eps|^{2\alpha} \eps^{\delta\left(\frac{N}{2}+s\right)}\lesssim |\log\eps|^{2\alpha-2}.
\end{aligned}
\end{equation}
\textbf{Case II.2.} If $|x|<\eps^{2-\mu-\delta}$ and $|y|\leq\eps^2$, then
\begin{equation}
\label{eq:estimateJ223}
\begin{aligned}
&\int_{B_{\frac{1}{2}}(0)\setminus B_{\eps^{2-\mu}}(0)} \int_{B_{\eps^{2-\mu}}(0), |x-y|>\frac{|y|}{2},\ |x|\geq 2|y|,\ |x|< \eps^{2-\mu-\delta}, |y|\leq \eps^2} \frac{|\theta_\eps(x)|^2}{|x-y|^{N+2s}} \frac{\, \dd x\, \dd y}{|x|^{\frac{N-2s}{2}} |y|^{\frac{N-2s}{2}}}\\
&\lesssim |\log\eps|^{2\alpha} \int_{B_{\eps^{2-\mu-\delta}}(0)\setminus B_{\eps^{2-\mu}}(0)} \frac{1}{|x|^{\frac{3N}{2}+s}} \, \dd x\int_{B_{\eps^2}(0)} \frac{1}{|y|^{\frac{N-2s}{2}}}\, \dd y\\
&\lesssim |\log\eps|^{2\alpha} \eps^{\mu\left(\frac{N}{2}+s\right)}\lesssim |\log\eps|^{2\alpha-2}.\\
\end{aligned}
\end{equation} 
\textbf{Case II.3.} If $|x|<\eps^{2-\mu-\delta}$ and $|y|>\eps^2$, then 
\begin{equation*}
\begin{aligned}
&\int_{B_{\frac{1}{2}}(0)\setminus B_{\eps^{2-\mu}}(0)} \int_{B_{\eps^{2-\mu}}(0), |x-y|>\frac{|y|}{2},\ |x|\geq 2|y|,\ |x|< \eps^{2-\mu-\delta}, |y|>\eps^2} \frac{|\theta_\eps(x)|^2}{|x-y|^{N+2s}} \frac{\, \dd x\, \dd y}{|x|^{\frac{N-2s}{2}} |y|^{\frac{N-2s}{2}}}\\
&\lesssim \int_{B_{\eps^{2-\mu-\delta}}(0)\setminus B_{\eps^{2-\mu}}(0)}\int_{B_{\eps^{2-\mu}}(0)\setminus B_{\eps^2}(0)} \frac{|\theta_\eps(x)-\theta_\eps(y)|^2}{|x|^{\frac{3N}{2}+s} |y|^{\frac{N-2s}{2}}} \, \dd x\, \dd y.
\end{aligned}
\end{equation*}
Using \eqref{eq:diff-1} we get:
\begin{equation*}
\begin{aligned}
&\int_{B_{\eps^{2-\mu-\delta}}(0)\setminus B_{\eps^{2-\mu}}(0)}\int_{B_{\eps^{2-\mu}}(0)\setminus B_{\eps^2}(0)} \frac{|\theta_\eps(x)-\theta_\eps(y)|^2}{|x|^{\frac{3N}{2}+s} |y|^{\frac{N-2s}{2}}} \, \dd x\, \dd y\\
&\lesssim |\log\eps|^{2\alpha-2} \int_{B_{\eps^{2-\mu-\delta}}(0)\setminus B_{\eps^{2-\mu}}(0)} \frac{1}{|x|^{\frac{3N}{2}+s-\frac{1}{2}}} \, \dd x \int_{B_{\eps^{2-\mu}}(0)\setminus B_{\eps^2}(0)} \frac{1}{|y|^{\frac{N}{2}-s+\frac{1}{2}}} \, \dd y\\
&\lesssim |\log\eps|^{2\alpha-2} \int_{\eps^{2-\mu}}^{2-\mu-\delta} r^{-\frac{N}{2}-s-\frac{1}{2}} \, \dd r \int_{\eps^2}^{\eps^{2-\mu}} t^{\frac{N}{2}+s-\frac{3}{2}} \, \dd t\\
&\lesssim |\log\eps|^{2\alpha-2}.
\end{aligned}
\end{equation*}
Thus,
\begin{equation}
\label{eq:estimateJ224}
\int_{B_{\frac{1}{2}}(0)\setminus B_{\eps^{2-\mu}}(0)} \int_{B_{\eps^{2-\mu}}(0), |x-y|>\frac{|y|}{2},\ |x|\geq 2|y|,\ |x|< \eps^{2-\mu-\delta}, |y|>\eps^2} \frac{|\theta_\eps(x)|^2}{|x-y|^{N+2s}} \frac{\, \dd x\, \dd y}{|x|^{\frac{N-2s}{2}} |y|^{\frac{N-2s}{2}}}\lesssim |\log\eps|^{2\alpha-2}.
\end{equation}
The estimate of $J_2$  follows from \eqref{eq:estimate-on-J21}, \eqref{eq:estimateJ221}, \eqref{eq:estimateJ222}, \eqref{eq:estimateJ223} and \eqref{eq:estimateJ224}. 

 \textit{Step III. Proof of \eqref{eq:estimateJ3}.}
We split $J_3$ in two pieces:
\begin{equation*}
J_{3,1}\coloneqq 2\int_{B^{c}_{\frac{1}{2}}(0)} \int_{B_{\frac{1}{2}}(0)\setminus B_{\eps^{2-\mu}}(0), |y|\geq \frac{1}{|x|}} \frac{|\theta_\eps(y)|^2}{|x-y|^{N+2s}} \frac{\, \dd y\, \dd x}{|x|^{\frac{N-2s}{2}} |y|^{\frac{N-2s}{2}}},
\end{equation*}
and 
\begin{equation*}
J_{3,2}\coloneqq 2\int_{B^{c}_{\frac{1}{2}}(0)} \int_{B_{\frac{1}{2}}(0)\setminus B_{\eps^{2-\mu}}(0), |y|< \frac{1}{|x|}} \frac{|\theta_\eps(y)|^2}{|x-y|^{N+2s}} \frac{\, \dd y\, \dd x}{|x|^{\frac{N-2s}{2}} |y|^{\frac{N-2s}{2}}}.
\end{equation*}
In case of $J_{3,1}$ we get that $|x|\geq 2|y|$ which implies $|x-y|\geq \frac{|x|}{2}$ and $|x|\geq 2$. Then:
\begin{equation}
\label{eq:estimateJ31}
\begin{aligned}
J_{3,1}&\lesssim\int_{B^{c}_2(0)} \frac{1}{|x|^{\frac{3N}{2}+s}}\int_{|x|^{-1}\leq |y|\leq \frac{1}{2}} \frac{\log^{2\alpha}\left(\frac{1}{|y|}\right)}{|y|^{\frac{N-2s}{2}}} \, \dd y\, \dd x\\
&\lesssim\int_{B^{c}_2(0)} \frac{1}{|x|^{\frac{3N}{2}+s}}\int_{|x|^{-1}}^{\frac{1}{2}}\log^{2\alpha}\left(\frac{1}{r}\right) r^{\frac{N}{2}+s-1}\, \dd r\, \dd x\\
&\lesssim \int _0^{\frac 12} r^{-\frac 12}\, \dd r 
\int_{B^{c}_2(0)} \frac{ 1}{|x|^{\frac{3N}{2}+s}}\, \dd x\lesssim 1, 
\end{aligned}
\end{equation}
where we have used the fact that, since $\frac{N}{2}+s-\frac{1}{2}\geq s>0$, we get 
\begin{equation}
\label{eq:ineq-r-log-1-r}
\log^{2\alpha}\left(\frac{1}{r}\right) r^{\frac{N}{2}+s-\frac{1}{2}}\lesssim 1,\quad \forall\,r\in (0,1].
\end{equation}

In case of $J_{3,2}$ we must have $|x|<\eps^{\mu-2}$. We treat two cases: if $|x|>2|y|$, then $|x-y|\geq \frac{|x|}{2}$ and we obtain that $|x-y|\sim |x|$. Thus,
\begin{equation}
\label{eq:estimateJ321-1}
\begin{aligned}
&\int_{B^{c}_{\frac{1}{2}}(0)} \int_{B_{\frac{1}{2}}(0)\setminus B_{\eps^{2-\mu}}(0), |y|< \frac{1}{|x|},|x|>2|y|} \frac{|\theta_\eps(y)|^2}{|x-y|^{N+2s}} \frac{\, \dd y\, \dd x}{|x|^{\frac{N-2s}{2}} |y|^{\frac{N-2s}{2}}}\\
&\lesssim \int_{B^{c}_{\frac{1}{2}}(0)} \int_{B_{\frac{1}{2}}(0)\setminus B_{\eps^{2-\mu}}(0), |y|< \frac{1}{|x|},|x|>2|y|} \frac{|\theta_\eps(y)|^2}{|x|^{\frac{3N}{2}+s} |y|^{\frac{N}{2}-s}} \, \dd y \, \dd x.
\end{aligned}
\end{equation}
 
Using that $|\theta_\eps(y)|\leq \log^{\alpha}\left(\frac{1}{|y|}\right)$, we get:
\begin{equation*}
\begin{aligned}
\int_{B^{c}_{\frac{1}{2}}(0)} &\int_{B_{\frac{1}{2}}(0)\setminus B_{\eps^{2-\mu}}(0), |y|< \frac{1}{|x|},|x|>2|y|} \frac{|\theta_\eps(y)|^2}{|x|^{\frac{3N}{2}+s} |y|^{\frac{N}{2}-s}} \, \dd y \, \dd x\\
& \leq \int_{B_{\frac{1}{2}}(0)\setminus B_{\eps^{2-\mu}}(0)} \frac{\log^{2\alpha}\left(\frac{1}{|y|}\right)}{|y|^{\frac{N}{2}-s}} \, \dd y \int_{B^{c}_{\frac{1}{2}}(0)} \frac{1}{|x|^{\frac{3N}{2}+s}} \, \dd x\\
&\lesssim \int_{\eps^{2-\mu}}^{\frac{1}{2}} \log^{2\alpha}\left(\frac{1}{t}\right) t^{\frac{N}{2}+s-1} \, \dd t \int_{\frac{1}{2}}^{\infty} r^{-\frac{N}{2}-s-1} \, \dd r\lesssim \int_{\eps^{2-\mu}}^{\frac{1}{2}} t^{-\frac{1}{2}} \, \dd t\lesssim 1,
\end{aligned}
\end{equation*} 
where we have used \eqref{eq:ineq-r-log-1-r}.

Consequently, 
\begin{equation}
\label{eq:estimateJ321}
\int_{B^{c}_{\frac{1}{2}}(0)} \int_{B_{\frac{1}{2}}(0)\setminus B_{\eps^{2-\mu}}(0), |y|< \frac{1}{|x|},|x|>2|y|} \frac{|\theta_\eps(y)|^2}{|x-y|^{N+2s}} \frac{\, \dd y\, \dd x}{|x|^{\frac{N-2s}{2}} |y|^{\frac{N-2s}{2}}}\lesssim 1.
\end{equation}
If $|x|\leq 2|y|$ then $|x|\in\left[\frac{1}{2},1\right]$, $|y|\in\left[\frac{1}{4},\frac{1}{2}\right]$. If $|x-y|>\frac{|y|}{2}\geq \frac{1}{8}$, then 
\begin{equation}
\label{eq:estimateJ322}
\int_{B^{c}_{\frac{1}{2}}(0)} \int_{B_{\frac{1}{2}}(0)\setminus B_{\eps^{2-\mu}}(0), |y|< \frac{1}{|x|},|x|\leq 2|y|, |x-y|>\frac{|y|}{2}} \frac{|\theta_\eps(y)|^2}{|x-y|^{N+2s}} \frac{\, \dd y\, \dd x}{|x|^{\frac{N-2s}{2}} |y|^{\frac{N-2s}{2}}}\lesssim 1,
\end{equation}
while if $|x-y|\leq \frac{|y|}{2}$, then $|x-y|\leq \frac{1}{4}$ and we proceed as follows:
\begin{equation*}
\begin{aligned}
&\int_{B^{c}_{\frac{1}{2}}(0)} \int_{B_{\frac{1}{2}}(0)\setminus B_{\eps^{2-\mu}}(0),\ |y|< \frac{1}{|x|},\ |x|\leq 2|y|,\ |x-y|\leq\frac{|y|}{2}} \frac{|\theta_\eps(y)|^2}{|x-y|^{N+2s}} \frac{\, \dd y\, \dd x}{|x|^{\frac{N-2s}{2}} |y|^{\frac{N-2s}{2}}}\\
&\lesssim \int_{B_1(0)\setminus B_{\frac{1}{2}}(0)} \int_{B_{\frac{1}{2}}(0)\setminus B_{\frac{1}{4}}(0),\ |x-y|\leq \frac{|y|}{2}} \frac{|\theta_\eps(y)|^2}{|x-y|^{N+2s}} \, \dd y\, \dd x\\
&\lesssim  \int_{B_1(0)\setminus B_{\frac{1}{2}}(0)} \int_{B_{\frac{1}{2}}(0)\setminus B_{\frac{1}{4}}(0),\ |x-y|\leq \frac{|y|}{2}} \frac{|\theta_\eps(y)-\theta_\eps(x)|^2}{|x-y|^{N+2s}} \, \dd y\, \dd x\\
&\lesssim \int_{B_1(0)\setminus B_{\frac{1}{2}}(0)} \int_{B_{\frac{1}{2}}(0)\setminus B_{\frac{1}{4}}(0), |x-y|\leq \frac{1}{4}} \frac{|D\theta_\eps(z)|^2}{|x-y|^{N+2s-2}} \, \dd y\, \dd x,
\end{aligned}
\end{equation*}
where $z=tx+(1-t)y$ for some $t\in [0,1]$. Then $|z|\geq |y|-|x-y|\geq \frac{|y|}{2}\geq \frac{1}{8}$. Using estimates \eqref{eq:grad-theta-eps} we get that $|D\theta_\eps(z)|\lesssim 1$. Then
\begin{equation*}
\int_{B_1(0)\setminus B_{\frac{1}{2}}(0)} \int_{B_{\frac{1}{2}}(0)\setminus B_{\frac{1}{4}}(0)} \frac{|D\theta_\eps(z)|^2}{|x-y|^{N+2s-2}} \, \dd x\, \dd y\lesssim \int_{|w|<\frac{1}{4}} \frac{1}{|w|^{N+2s-2}} \, \dd w\lesssim 1.
\end{equation*}
Thus, 
\begin{equation}
\label{eq:estimateJ323}
\int_{B^{c}_{\frac{1}{2}}(0)} \int_{B_{\frac{1}{2}}(0)\setminus B_{\eps^{2-\mu}}(0),\ |x|< \frac{1}{|y|},\ |x|\leq 2|y|,\ |x-y|\leq\frac{|y|}{2}} \frac{|\theta_\eps(y)|^2}{|x-y|^{N+2s}} \frac{\, \dd y\, \dd x}{|x|^{\frac{N-2s}{2}} |y|^{\frac{N-2s}{2}}}\lesssim 1.
\end{equation}
The bound on $J_3$ follows from \eqref{eq:estimateJ31}, \eqref{eq:estimateJ321}, \eqref{eq:estimateJ322} and \eqref{eq:estimateJ323}.
 
The proof of the Lemma is now complete.
\end{proof}
We now prove the desired upper bound in Theorem \ref{thm:main}. We consider $\eps=h^{\frac{\gamma}{2}}$ for some $\gamma\in (0,1)$. We also consider the space $$\tilde{H}^s(\BB)\coloneqq \{u\in H^s(\RR^N): u=0\ \text{on } \RR^N\setminus\BB\}$$ and the Hilbert space $$(V,\| \cdot\|_V)\coloneqq (\tilde{H}^s(\BB),[\cdot]_{\dot{H}^s(\RR^N)}).$$
Let us denote by $\Pi_h : V\rightarrow V_h$ the projection on the space $V_h$ with respect to $\|\cdot\|_{V}$-norm (\cite[Section 4]{borthagaray2018}) i.e. $$[u-\Pi_h u]_{\dot{H}^s(\RR^N)}=\inf_{u_h\in V_h} [u-u_h]_{\dot{H}^s(\RR^N)}$$
and by $I_h u_\eps$ the linear interpolant of the function $u_\eps$, extended with zero outside $\BB_h$. 

By the definition of $\Pi_h$ we get that $$[\Pi_h u_\eps]_{\dot{H}^s(\RR^N)}\leq [u_\eps]_{\dot{H}^s(\RR^N)}$$ and $$[\Pi_h u_\eps-u_\eps]_{\dot{H}^s(\RR^N)} \leq [I_h u_\eps-u_\eps]_{\dot{H}^s(\RR^N)}.$$ We get:
\begin{equation}
\label{eq:lambdah-1}
\begin{aligned}
\lambda_{N,s,h}(\BB)&\leq \frac{ [\Pi_h u_\eps]^2_{\dot{H}^s(\RR^N)}}{\int_{\BB}  {|\Pi_h u_\eps(x)|^2}{|x|^{-2s}} \, \dd x}\leq \frac{ [\Pi_h u_\eps]^2_{\dot{H}^s(\RR^N)}}{\int_{\BB} {|\Pi_h u_\eps(x)|^2}{|x|^{-2s}} \, \dd x}.
\end{aligned}
\end{equation}
Using that $\frac{1}{\delta} |a-b|^2+|b|^2\geq \frac{1}{1+\delta} |a|^2$ for any real numbers $a,b,\delta$ with $\delta>0$, we obtain:
\begin{equation*}
\begin{aligned}
\int_{\BB} &\frac{|\Pi_h u_\eps(x)|^2}{|x|^{2s}} \, \dd x \\
& \geq \frac{1}{1+\delta} \int_{\BB} \frac{|u_\eps(x)|^2}{|x|^{2s}} \, \dd x-\frac{1}{\delta} \int_{\BB} \frac{|(\Pi_h u_\eps-u_\eps)(x)|^2}{|x|^{2s}} \, \dd x\\
&\geq \frac{1}{1+\delta} \int_{\BB} \frac{|u_\eps(x)|^2}{|x|^{2s}} \, \dd x-\frac{1}{\delta \lambda_{N,s}} \int_{\RR^N} \int_{\RR^N} \frac{|(\Pi_h u_\eps-u_\eps)(x)-(\Pi_h u_\eps-u_\eps)(y)|^2}{|x-y|^{N+2s}} \, \dd x\, \dd y\\
&\geq  \frac{1}{1+\delta} \int_{\BB} \frac{|u_\eps(x)|^2}{|x|^{2s}} \, \dd x-\frac{1}{\delta \lambda_{N,s}} \int_{\RR^N} \int_{\RR^N} \frac{|(I_h u_\eps-u_\eps)(x)-(I_h u_\eps-u_\eps)(y)|^2}{|x-y|^{N+2s}} \, \dd x\, \dd y.
\end{aligned}
\end{equation*}
Using the Homogeneous Gagliardo-Nirenberg Interpolation Inequality \cite[Theorem 7.45]{GLeoni2023}, we get that 
\begin{equation*}
[I_h u_\eps-u_\eps]^2_{\dot{H}^s(\RR^N)}\lesssim_{p,q} \|I_h u_\eps-u_\eps\|^{2(1-s)}_{L^q(\RR^N)} \|D(I_h u_\eps-u_\eps)\|^{2s}_{L^p(\RR^N)},
\end{equation*}
where $1<q,p<\infty$ are such that $\frac{1-s}{q}+\frac{s}{p}=\frac{1}{2}$.

We fix $q$ and $p$ such that $q,p>\frac{2N}{N+2(2-s)}$ (for example, one can choose $q=p=2$). Employing Lemmas \ref{lem:interpolation-error-Lq-norm} and \ref{lem:interpolation-error-Lp-norm} we obtain:
\begin{equation}
\label{eq:s-norm}
\int_{\RR^N} \int_{\RR^N} \frac{|(I_h u_\eps-u_\eps)(x)-(I_h u_\eps-u_\eps)(y)|^2}{|x-y|^{N+2s}} \, \dd x\, \dd y\leq C_{N,s,p,q} h^{2(2-s)(1-\gamma)} |\log h|^{2\alpha},
\end{equation}
where $C_{N,s,p,q}$ is a constant which depends only on the dimension $N$, the fractional parameter $s$ and the interpolation parameters $q$ and $p$. Thus, we get:
\begin{equation}
\label{eq:interpolant-error}
\int_{\BB} \frac{|\Pi_h u_\eps(x)|^2}{|x|^{2s}} \, \dd x  \geq  \frac{1}{1+\delta} \int_{\BB} \frac{|u_\eps(x)|^2}{|x|^{2s}} \, \dd x- \frac{1}{\delta\lambda_{N,s}} C_{N,s,p,q} h^{2(2-s)(1-\gamma)} |\log h|^{2\alpha}.
\end{equation}
Plugging estimates \eqref{eq:s-norm} and \eqref{eq:interpolant-error} into \eqref{eq:lambdah-1} and using Lemma \ref{lem:minimizing-sequence}, we get:
\begin{equation*}
\begin{aligned}
\lambda_{N,s,h}(\BB) \leq \frac{\lambda_{N,s}+C_{N,s} |\log h|^{-2} }{1-\frac{\delta}{1+\delta} -\frac{1}{\delta\lambda_{N,s}} \tilde{C}_{N,s,p,q} h^{2(2-s)(1-\gamma)} |\log h|^{-1}},
\end{aligned}
\end{equation*}
where $C_{N,s}, \tilde{C}_{N,s,p,q}$ are positive constants. Taking $\delta=\sqrt{\frac{\tilde{C}_{N,s,p,q} h^{2(2-s)(1-\gamma)} |\log h|^{-1}}{\lambda_{N,s}}}>0$ we obtain that:
\begin{equation*}
\frac{\delta}{1+\delta} +\frac{1}{\delta\lambda_{N,s}}\tilde{C}_{N,s,p,q} h^{2(2-s)(1-\gamma)} |\log h|^{-1}\leq 2\sqrt{\frac{\tilde{C}_{N,s,p,q}}{\lambda_{N,s}}} h^{(2-s)(1-\gamma)} |\log h|^{-\frac{1}{2}}<<1, \quad\text{for}\ h<<1,
\end{equation*}
and then
\begin{equation*}
\begin{aligned}
\lambda_{N,s,h}(\BB)& \leq \frac{\lambda_{N,s}+C_{N,s} |\log h|^{-2}}{1-2\sqrt{\frac{\tilde{C}_{N,s,p,q}}{\lambda_{N,s}}} h^{(2-s)(1-\gamma)} |\log h|^{-\frac{1}{2}}}\\
&\leq (\lambda_{N,s}+C_{N,s} |\log h|^{-2}) \left(1+4\sqrt{\frac{\tilde{C}_{N,s,p,q}}{\lambda_{N,s}}} h^{(2-s)(1-\gamma)} |\log h|^{-\frac{1}{2}}\right)\\
&\leq \lambda_{N,s}+3C_{N,s} |\log h|^{-2}.
\end{aligned}
\end{equation*}
The proof of the upper bound is now completed.

\section{Future research directions}
\label{sec:future-research}
The method developed here — a logarithmic improvement of the underlying inequality
combined with a ground-state representation and an explicit near-optimal competitor —
is not special to the $L^2$ fractional Hardy inequality, and it is natural to ask how far
it extends. We collect below the main $L^p$-type variants of the fractional Hardy
inequality that are not addressed in this paper, and indicate, in each case, what
additional ingredient the present approach would require.
\begin{enumerate}
\item \textbf{The $L^p$ fractional Hardy inequality of Herbst.}
Herbst \cite{Herbst1977} proved that
\begin{equation*}
\|(-\Delta)^{\frac{s}{2}} u\|^p_{L^p(\RR^N)}\geq  {\lambda}_{N,s}(p) \int_{\RR^N} \frac{|u(x)|^p}{|x|^{ps}} \, \dd x,
\end{equation*}
for some optimal constant $ {\lambda}_{N,s}(p)$, where $s>0$, $1<p<\frac{N}{s}$. Our strategy requires two things that are not yet available for
$p\ne2$: a \emph{logarithmic-improvement} inequality analogous to
Proposition~\ref{prop:log-improvement}, quantifying the deficit of $\tilde\lambda_{N,s,p}$
on bounded domains, and a \emph{remainder-term} (ground-state) representation of the
Hardy deficit playing the role of \eqref{eq:representation-for-ueps}, on which the
construction of an explicit near-optimal sequence $u_\eps$ could be based. Neither seems
to be recorded in the literature for general $p$.
\medskip

\item \textbf{Lower bounds for the Hardy deficit, and the missing upper bound.}
Frank and Seiringer \cite{FRANK20083407} proved a \emph{lower} bound for the Hardy deficit,
\begin{align*}
\int_{\RR^N}\int_{\RR^N} \frac{|u(x)-u(y)|^p}{|x-y|^{N+ps}} \, \dd x\, \dd y&-\lambda_{N,s}(p) \int_{\RR^N} \frac{|u(x)|^p}{|x|^{sp}} \, \dd x\\
&\geq c_p\int_{\RR^N}\int_{\RR^N} \frac{|v(x)-v(y)|^p}{|x-y|^{N+ps}} \frac{\, \dd x\, \dd y}{|x|^{\frac{N-ps}{2}} |y|^{\frac{N-ps}{2}}},
\end{align*}
with    $0<c_p\leq 1$, $v(x)=|x|^{\frac{N-ps}{p}}u(x)$, and with a suitable restriction on $N,s,$ and $p$.  A
comparable lower bound, under the normalization $\int_{\RR^N} |u(x)|^p|x|^{-sp}\, \dd x=1$,
was established more recently in \cite{banerjee2026quantitative}:
\begin{equation*}
    \int_{\RR^N}\int_{\RR^N} \frac{|u(x)-u(y)|^p}{|x-y|^{N+ps}} \, \dd x\, \dd y-\lambda_{N,s}(p) \gtrsim (d_{s,p}(u,\mathcal{Z}))^{\max\{4,2p\}},
\end{equation*}
$\mathcal{Z}$ denoting the family of pseudo-minimizers, with distance measured in
Marcinkiewicz space. A discretization result in the style of Theorem~\ref{thm:main} needs
a \emph{matching upper bound} on the same deficit; for $p=2$ this is exactly what the
ground-state representation \eqref{eq:representation-for-ueps} supplies. Whether an
analogous representation, together with a suitably adapted competitor $u_\eps$, can
produce the matching upper bound for $p\ne2$ — and whether the resulting discrete rate
remains logarithmic, as it does here, or degrades because of the $\max\{4,2p\}$-power
distance appearing above — is open.

\medskip
\item \textbf{The Bregman-form fractional Hardy inequality.}
Bogdan et al. \cite{krzysztof2022} proved a
Bregman-divergence version of the fractional Hardy inequality: denoting the Sobolev– Bregman form by
\[\mathcal E_p[u]=\frac12\iint(u(x)-u(y))\big(|u(x)|^{p-1}\mathrm{sgn}(u(x))-|u(y)|^{p-1}\mathrm{sgn}(u(y))\big)\nu(x,y)\,\dd y\,\dd x\]
  and
$F_p(a,b)=|b|^p-|a|^p-p|a|^{p-1}\mathrm{sgn}(a)(b-a)$ the Bregman divergence
they obtained 
\begin{equation*}
    \mathcal{E}_p[u]-k_{\frac{N-2s}{p}} \int_{\RR^N} \frac{|u(x)|^p}{|x|^{2s}} \, \dd x= \frac{1}{p} \int_{\RR^N}\int_{\RR^N} F_p\left(\frac{u(x)}{h(x)},\frac{u(y)}{h(y)}\right) h(x)^{p-1} h(y) \nu(x,y) \, \dd y \, \dd x,
\end{equation*}
where
$h(x)=|x|^{-\frac{N-2s}{p}}$, $k_{\frac{N-2s}{p}}$ an explicit optimal constant, and
$$\nu(x,y)=\frac{2^{2s}\Gamma(\frac{N+2s}{2})}{\pi^{N/2}|\Gamma(-s)|}|x-y|^{-N-2s}.$$   To our
knowledge, no logarithmic improvement of this inequality on bounded domains is currently
available in the literature; obtaining one — the direct analogue of
Proposition~\ref{prop:log-improvement} for the Bregman form — would be the first step
toward a discrete convergence result of the type established here, and is, as far as we
know, entirely open.
\end{enumerate}

Beyond enlarging the class of inequalities treated, one may also ask whether the
\emph{approximation space} $V_h$ itself can be replaced. A recent direction in this
spirit, motivated by neural PDE solvers, replaces the piecewise linear finite element
space by a realization class $\mathcal{M}_p$ of linear combinations of Gaussian functions
with freely varying centers; for a model elliptic problem, Zuazua \cite{zuazua2026} shows
that a logarithmic convergence rate persists for this nonlinear approximation class as
well. It would be of interest to determine whether the near-optimal sequence $u_\eps$
constructed in Section~\ref{sec:upper-bound} — or a Gaussian-mixture analogue of it — can
be projected onto $\mathcal{M}_p$ with a comparable logarithmic loss, which would suggest
that the $|\log h|^{-2}$ rate in Theorem~\ref{thm:main} is intrinsic to the fractional
Hardy deficit itself, rather than an artifact of the piecewise linear discretization.

\subsection*{Acknowledgements}
 A. Dima was partially supported by a scholarship of the SCOSAAR. 
L. I.  Ignat  was partially supported by a grant of the Ministry of Research, Innovation, and Digitization, CCCDI -
UEFISCDI, project number ROSUA-2024-0001, within PNCDI IV. The authors wish to thank  Krzysztof Bogdan for fruitful discussions. This article is based upon work from COST Action 24122 mSPACE, supported by COST (European Cooperation in Science and Technology), www.cost.eu.

\appendix

\section{Technical Lemmas}
\begin{lemma}
\label{lem:hardy-type-inequality}
Let $\alpha,\beta,r>0$ and $s\in \left(0,\min\left\{1,\frac{N}{2}\right\}\right)$, $s\neq \frac{1}{2}$. The following inequality holds for any function $u\in H^1(B_r(0))$:
\begin{equation*}
(N-2s-r^{2-2s} \alpha)\int_{B_r(0)} \frac{u^2(x)}{|x|^{2s}} \, \dd x\leq (N+\beta) r^{-2s} \int_{B_r(0)} u^2(x) \, \dd x+\left(\frac{r^{2-2s}}{\beta}+\frac{1}{\alpha}\right) \int_{B_r(0)} |Du(x)|^2 \, \dd x.
\end{equation*}
\end{lemma}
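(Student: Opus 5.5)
The plan is to prove the inequality by a divergence-theorem (Rellich/Pohozaev-type) identity with the vector field $F(x)=x|x|^{-2s}$, for which $\operatorname{div}F=(N-2s)|x|^{-2s}$, and then to absorb the two error terms by Young's inequality. By density it suffices to take $u\in C^\infty(\overline{B_r(0)})$. All integrals are finite because $2s<N$, and $u^2|x|^{-2s}$ is integrable near the origin.

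\textbf{Step 1 (the identity).} We apply the divergence theorem to $u^2F$ on $B_r(0)\setminus B_\delta(0)$. This gives
\[
(N-2s)\int_{B_r\setminus B_\delta}\frac{u^2}{|x|^{2s}}\,\dd x = r^{1-2s}\int_{\partial B_r}u^2\,\dd\sigma-\delta^{1-2s}\int_{\partial B_\delta}u^2\,\dd\sigma-2\int_{B_r\setminus B_\delta}\frac{u\,Du\cdot x}{|x|^{2s}}\,\dd x .
\]
The inner boundary term is $O(\delta^{N-2s})\|u\|_\infty^2$, so it tends to $0$ as $\delta\to0$ since $2s<N$. This removal of the singular inner boundary is the only delicate point, and it is routine.

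\textbf{Step 2 (the interior term).} We bound the last term pointwise by Young's inequality:
\[
2|u||Du||x|^{1-2s}\le \alpha u^2|x|^{2-4s}+\tfrac1\alpha|Du|^2 .
\]
On $B_r$ we have $|x|^{2-4s}=|x|^{-2s}|x|^{2-2s}\le r^{2-2s}|x|^{-2s}$ because $s<1$. The interior term is therefore at most
\[
\alpha r^{2-2s}\int_{B_r}u^2|x|^{-2s}\,\dd x+\tfrac1\alpha\int_{B_r}|Du|^2\,\dd x ,
\]
and its first part is moved to the left-hand side, which produces the factor $N-2s-r^{2-2s}\alpha$.

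\textbf{Step 3 (the trace term).} We apply the same identity with the field $x$, whose divergence is $N$:
\[
r\int_{\partial B_r}u^2\,\dd\sigma=N\int_{B_r}u^2\,\dd x+2\int_{B_r}u\,Du\cdot x\,\dd x .
\]
Using $|x|\le r$ and Young's inequality in the form $2|u||Du|r\le \beta u^2+\tfrac{r^2}{\beta}|Du|^2$, we obtain
\[
\int_{\partial B_r}u^2\,\dd\sigma\le \frac{N+\beta}{r}\int_{B_r}u^2\,\dd x+\frac{r}{\beta}\int_{B_r}|Du|^2\,\dd x .
\]
Multiplying by $r^{1-2s}$ gives exactly $(N+\beta)r^{-2s}\int u^2+\frac{r^{2-2s}}{\beta}\int|Du|^2$. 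Combining Steps 1–3 yields the stated inequality.

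The restriction $s\neq\frac12$ plays no role in this argument. The only point requiring care is the limit $\delta\to0$ in Step 1, and the passage from smooth $u$ to $u\in H^1(B_r(0))$. For the latter, both sides are continuous in the $H^1(B_r(0))$-norm: the weighted term is controlled by the fractional-order Hardy inequality on bounded domains, or directly by the estimate itself applied to differences.
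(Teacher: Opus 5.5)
Your proof is correct and follows the paper's argument. You use the same identity $(N-2s)\int_{B_r(0)}u^2|x|^{-2s}\,dx=r^{1-2s}\int_{\partial B_r(0)}u^2\,dS-2\int_{B_r(0)}u\,Du\cdot x\,|x|^{-2s}\,dx$, the same $\alpha$-Young bound on the interior term, and the same $\beta$-bound on the trace term via $\mathrm{div}(x u^2)$. The only difference is that you integrate the field $x|x|^{-2s}$ directly. The paper instead writes $|x|^{-2s}=D\left(\frac{|x|^{1-2s}}{1-2s}\right)\cdot\frac{x}{|x|}$ and so divides by $1-2s$, which is the sole reason for its hypothesis $s\neq\frac12$. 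Your remark that this restriction is unnecessary is therefore right, and your excision of $B_\delta(0)$ and density argument are more careful than the paper's.
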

\begin{proof}
The proof is a slight modification of the one from \cite[Theorem 7, p. 296]{evans2010partial}. It is enough to consider that $u\in C^\infty(B_r(0))$. Since we assume that $s\neq \frac{1}{2}$, we have:
\begin{equation*}
\begin{aligned}
\int_{B_r(0)} \frac{u^2(x)}{|x|^{2s}} \, \dd x&=\int_{B_r(0)} u^2(x) D\left(\frac{|x|^{1-2s}}{1-2s}\right) \frac{x}{|x|} \, \dd x\\
&=-\frac{1}{1-2s} \int_{B_r(0)} 2u(x)Du(x) \frac{x}{|x|^{2s}} \, \dd x-\frac{N-1}{1-2s} \int_{B_r(0)} \frac{u^2(x)}{|x|^{2s}} \, \dd x\\
&+\frac{1}{1-2s} \int_{\partial B_r(0)} u^2(x) \frac{x}{|x|} \frac{x}{|x|^{2s}} \, \dd S,
\end{aligned}
\end{equation*}
so that 
\begin{equation}
\label{eq:hardy-type-inequality-main}
(N-2s) \int_{B_r(0)} \frac{u^2(x)}{|x|^{2s}} \, \dd x= r^{1-2s}\int_{\partial B_r(0)} u^2(x) \, \dd S-\int_{B_r(0)} 2u(x)Du(x) \frac{x}{|x|^{2s}} \, \dd x.
\end{equation}
Using that $|2ab|\leq \alpha a^2 + \frac{1}{\alpha} b^2$ for any $a,b\in \RR$, we get that
\begin{equation}
\begin{aligned}
\label{eq:hardy-type-inequality-alpha}
\left|\int_{B_r(0)}  2u(x)Du(x) \frac{x}{|x|^{2s}} \, \dd x\right| & \leq \int_{B_r(0)} 2|Du(x)| \frac{|u(x)|}{|x|^{2s-1}} \, \dd x\leq \alpha \int_{B_r(0)} \frac{u^2(x)}{|x|^{4s-2}} \, \dd x+\frac{1}{\alpha} \int_{B_r(0)} |Du(x)|^2 \, \dd x\\
&=\alpha\int_{B_r(0)} \frac{u^2(x)}{|x|^{2s}} |x|^{2-2s} \, \dd x+\frac{1}{\alpha} \int_{B_r(0)} |Du(x)|^2 \, \dd x\\
&\leq \alpha r^{2-2s} \int_{B_r(0)} \frac{u^2(x)}{|x|^{2s}} \, \dd x+ \frac{1}{\alpha} \int_{B_r(0)} |Du(x)|^2 \, \dd x.
\end{aligned}
\end{equation}
Next,
\begin{equation}
\label{eq:integral-on-partial}
r\int_{\partial B_r(0)} u^2(x) \, \dd S=\int_{B(0,r)} div(x u^2(x)) \, \dd x=N\int_{B_r(0)} u^2(x) \, \dd x + \int_{B_r(0)} 2u(x) Du(x) x \, \dd x.
\end{equation}
Using the same inequality as before, we get that
\begin{equation*}
\begin{aligned}
\left|\int_{B_r(0)} 2u(x)Du(x) x \, \dd x\right|&\leq \beta\int_{B_r(0)} u^2(x) \, \dd x+ \frac{1}{\beta}\int_{B_r(0)} |Du(x)|^2 |x|^2 \, \dd x\\
&\leq \beta\int_{B_r(0)} u^2(x) \, \dd x+ \frac{r^2}{\beta} \int_{B_r(0)}|Du(x)|^2 \, \dd x.
\end{aligned}
\end{equation*}
Thus,
\begin{equation}
\label{eq:hardy-type-inequality-beta}
r^{1-2s}\int_{\partial B_r(0)} u^2(x) \, \dd S\leq (N+\beta) r^{-2s} \int_{B_r(0)} u^2(x)\, \dd x+\frac{r^{2-2s}}{\beta} \int_{B_r(0)} |Du(x)|^2 \, \dd x.
\end{equation}
The result now follows from \eqref{eq:hardy-type-inequality-main}, \eqref{eq:hardy-type-inequality-alpha} and \eqref{eq:hardy-type-inequality-beta}.
\end{proof}

\begin{lemma}
\label{lem:hardy-type-inequality-2}
Let $N\geq 2$ and $r>0$. The following inequality holds for any function $u\in H^1(B_r(0))$:
\begin{equation}
\label{eq:hardy-type-inequality-2-main}
(N-1) \int_{B_r(0)} \frac{u^2(x)}{|x|} \, \dd x\leq (N+2) r^{-1} \int_{B_r(0)} u^2(x) \, \dd x+2r\int_{B_r(0)} |Du(x)|^2 \, \dd x. 
\end{equation}
\end{lemma}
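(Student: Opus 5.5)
The plan is to repeat the argument of Lemma \ref{lem:hardy-type-inequality}, now with the weight $|x|^{-1}$ in place of $|x|^{-2s}$. By density it suffices to treat $u\in C^\infty(\overline{B_r(0)})$. The key identity comes from the divergence theorem applied to the vector field $u^2(x)\,\frac{x}{|x|}$. Since $\operatorname{div}\left(\frac{x}{|x|}\right)=\frac{N-1}{|x|}$, we get
\begin{equation*}
(N-1)\int_{B_r(0)} \frac{u^2(x)}{|x|}\, \dd x=\int_{\partial B_r(0)} u^2(x)\, \dd S-\int_{B_r(0)} 2u(x)Du(x)\cdot\frac{x}{|x|}\, \dd x .
\end{equation*}
This is legitimate for $N\geq 2$, because $\frac{x}{|x|}$ is bounded and its divergence is integrable near the origin. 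One can justify it by excising a small ball $B_\rho(0)$; the boundary term on $\partial B_\rho(0)$ is $O(\rho^{N-1})\to 0$. This is exactly the case $2s=1$ of \eqref{eq:hardy-type-inequality-main}, after multiplying by $r^{0}=r^{1-2s}$.

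The boundary term is controlled by \eqref{eq:integral-on-partial} with $\beta=1$:
\begin{equation*}
\int_{\partial B_r(0)} u^2\, \dd S\leq (N+1)r^{-1}\int_{B_r(0)} u^2\, \dd x+r\int_{B_r(0)}|Du|^2\, \dd x .
\end{equation*}

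For the cross term, the bound \eqref{eq:hardy-type-inequality-alpha} cannot be used as in Lemma \ref{lem:hardy-type-inequality}. With $2s=1$ it would put $\int u^2/|x|$ on both sides. Instead I would use the unweighted estimate
\begin{equation*}
\left|2u\,Du\cdot\frac{x}{|x|}\right|\leq 2|u||Du|\leq r^{-1}u^2+r|Du|^2 .
\end{equation*}
This bounds the cross term by $r^{-1}\int_{B_r(0)} u^2\, \dd x+r\int_{B_r(0)} |Du|^2\, \dd x$.

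Adding the two bounds gives precisely
\begin{equation*}
(N+2)r^{-1}\int_{B_r(0)}u^2\, \dd x+2r\int_{B_r(0)}|Du|^2\, \dd x ,
\end{equation*}
which is \eqref{eq:hardy-type-inequality-2-main}.

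The only point requiring care is the justification at the origin, together with the density extension to $H^1(B_r(0))$. For the latter, both sides of \eqref{eq:hardy-type-inequality-2-main} are continuous in the $H^1(B_r(0))$ norm. For the left side this follows from the classical Hardy inequality when $N\geq 3$; when $N=2$ it follows from the local integrability of $|x|^{-1}$ combined with Sobolev embedding. Alternatively, Fatou's lemma applied along a smooth approximating sequence suffices.
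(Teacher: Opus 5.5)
Your proof is correct and is essentially the paper's: both reduce to the identity $(N-1)\int_{B_r(0)}\frac{u^2(x)}{|x|}\,\dd x=Nr^{-1}\int_{B_r(0)}u^2\,\dd x+r^{-1}\int_{B_r(0)}2u\,Du\cdot x\,\dd x-\int_{B_r(0)}2u\,Du\cdot\frac{x}{|x|}\,\dd x$, and then apply the same two Young-type bounds to get exactly the constants $N+2$ and $2r$. The only difference is in how the identity is reached: the paper takes a detour through the auxiliary weight $\frac{1}{|x|\log(er/|x|)}$, whose logarithmic terms cancel, while you use $\operatorname{div}(x/|x|)=(N-1)/|x|$ directly (the case $s=\frac12$ of \eqref{eq:hardy-type-inequality-main}), which is cleaner; one small correction to your aside is that \eqref{eq:hardy-type-inequality-alpha} could in fact be used at $s=\frac12$ by absorbing the $\int u^2/|x|$ term into the left side as in Lemma \ref{lem:hardy-type-inequality}, though it would give different constants from those stated.
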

\begin{proof}
We use the same method as in Lemma \ref{lem:hardy-type-inequality}. We can consider that $u\in C^{\infty}(B_r(0))$. Since $$D\left(\frac{1}{|x|\log\left(\frac{er}{|x|}\right)}\right)=\frac{x}{|x|^3 \log^2\left(\frac{er}{|x|}\right)}-\frac{x}{|x|^3 \log\left(\frac{er}{|x|}\right)},$$ we get that $$\frac{1}{|x|}=\frac{1}{|x|\log\left(\frac{er}{|x|}\right)}-D\left(\frac{1}{|x|\log\left(\frac{er}{|x|}\right)}\right) x\log\left(\frac{er}{|x|}\right).$$ Consequently,
\begin{equation*}
\begin{aligned}
\int_{B_r(0)} \frac{u^2(x)}{|x|} \, \dd x&=\int_{B_r(0)} \frac{u^2(x)}{|x|\log\left(\frac{er}{|x|}\right)} \, \dd x-\int_{B_r(0)} u^2(x) D\left(\frac{1}{|x|\log\left(\frac{er}{|x|}\right)}\right) x\log\left(\frac{er}{|x|}\right) \, \dd x\\
&=\int_{B_r(0)} \frac{u^2(x)}{|x| \log\left(\frac{er}{|x|}\right)} \, \dd x-\int_{\partial B_r(0)} u^2(x) \, \dd S\\
&+\int_{B_r(0)} 2u(x) Du(x) \frac{x}{|x|} \, \dd x+\int_{B_r(0)} \frac{u^2(x)}{|x|\log\left(\frac{er}{|x|}\right)} div\left(x\log\left(\frac{er}{|x|}\right)\right)\, \dd x\\
&=\int_{B_r(0)} \frac{u^2(x)}{|x| \log\left(\frac{er}{|x|}\right)} \, \dd x-Nr^{-1}\int_{B_r(0)} u^2(x) \, \dd x\\
&-r^{-1} \int_{B_r(0)} 2u(x) Du(x) x \, \dd x+\int_{B_r(0)} 2u(x) Du(x) \frac{x}{|x|} \, \dd x\\
&+N\int_{B_r(0)} \frac{u^2(x)}{|x|} \, \dd x-\int_{B_r(0)} \frac{u^2(x)}{|x|\log\left(\frac{er}{|x|}\right)} \, \dd x,
\end{aligned}
\end{equation*}
where we have used \eqref{eq:integral-on-partial} and that $div\left(x\log\left(\frac{er}{|x|}\right)\right)=N\log\left(\frac{er}{|x|}\right)-1$. Thus, we obtain
\begin{equation*}
\begin{aligned}
(N-1)\int_{B_r(0)} \frac{u^2(x)}{|x|} \, \dd x&=N r^{-1}\int_{B_r(0)} u^2(x) \, \dd x +r^{-1} \int_{B_r(0)} 2u(x) Du(x) x \, \dd x\\
&-\int_{B_r(0)} 2u(x) Du(x) \frac{x}{|x|} \, \dd x.
\end{aligned}
\end{equation*}
Using the following two inequalities
\begin{equation*}
\left|\int_{B_r(0)} 2u(x) Du(x) x \, \dd x\right|\leq \int_{B_r(0)} u^2(x) \, \dd x + r^2 \int_{B_r(0)} |Du(x)|^2 \, \dd x,
\end{equation*}
and
\begin{equation*}
\begin{aligned}
\left|\int_{B_r(0)} 2u(x) Du(x) \frac{x}{|x|} \, \dd x\right|&\leq \int_{B_r(0)} 2\left|\frac{u(x)}{\sqrt{r}} Du(x) \frac{x\sqrt{r}}{|x|}\right| \, \dd x\\
&\leq r^{-1} \int_{B_r(0)} u^2(x) \, \dd x+r\int_{B_r(0)} |Du(x)|^2 \, \dd x,  
\end{aligned}
\end{equation*}
we obtain the desired conclusion.
\end{proof}

\begin{lemma}
\label{lem:D2ueps}
The gradient and the Hessian of the minimizing sequence $u_\eps$ satisfy:
\begin{equation*}
|D u_{\eps}(x)|\lesssim |x|^{-\frac{N+2(1-s)}{2}} \log^{\alpha}\left(\frac{1}{|x|}\right),  \quad \text{if}\ |x|\in \left(\eps^2,\frac{1}{2}\right),
\end{equation*}
and
\begin{equation*}
|D^2 u_\eps(x)|\lesssim |x|^{-\frac{N+2(2-s)}{2}} \log^{\alpha}\left(\frac{1}{|x|}\right), \quad \text{if}\ |x|\in \left(\eps^2,\frac{1}{2}\right)
\end{equation*}
and they both vanish otherwise.
\end{lemma}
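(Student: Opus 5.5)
We prove Lemma~\ref{lem:D2ueps} by direct differentiation of the product $u_\eps(x)=\tilde u_1(x)\,\eta_\eps(x)\,\psi(|x|)$. The vanishing statement comes from the supports of the factors. We have $\eta_\eps\equiv 0$ for $|x|\le \eps^{2-\mu}$, hence in a neighbourhood of $\{|x|\le\eps^2\}$, and $\psi(|x|)\equiv 0$ for $|x|\ge \frac12$. So $u_\eps$ is smooth and vanishes identically outside the annulus $\eps^2<|x|<\frac12$, and there both derivatives vanish.

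Inside the annulus we first record the pointwise bounds for $\tilde u_1$. Write $\tilde u_1(x)=f(|x|)$ with $f(r)=r^{-a}L(r)^\alpha$, where $a=\frac{N-2s}{2}$ and $L(r)=\log(1/r)$. Then
\[
f'(r)=-a r^{-a-1}L^\alpha-\alpha r^{-a-1}L^{\alpha-1},
\]
and $f''$ is a combination of $r^{-a-2}L^{\alpha}$, $r^{-a-2}L^{\alpha-1}$ and $r^{-a-2}L^{\alpha-2}$. For a radial function we have $|Dg|=|g'|$ and $|D^2 g|\lesssim |g''|+|g'|/r$. Since $|x|<\frac12$, we have $L\ge\log 2>0$, so $L^{\alpha-1}\lesssim L^\alpha$ and $L^{\alpha-2}\lesssim L^\alpha$; this holds for every $\alpha\ge 1$, including negative powers of $L$. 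Hence
\[
|D\tilde u_1|\lesssim |x|^{-a-1}L^\alpha,\qquad |D^2\tilde u_1|\lesssim |x|^{-a-2}L^\alpha.
\]
Note that $a+1=\frac{N+2(1-s)}{2}$ and $a+2=\frac{N+2(2-s)}{2}$, which are exactly the exponents in the statement.

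Next we use the cut-off bounds. Properties 3 and 4 of $\eta_\eps$ give $|\eta_\eps|\le1$, $|D\eta_\eps|\lesssim |x|^{-1}|\log\eps|^{-1}\le |x|^{-1}$ and $|D^2\eta_\eps|\lesssim |x|^{-2}$. For $\psi$, the derivatives of $\psi(|x|)$ are supported in $\frac14\le|x|\le\frac12$, where they are bounded and $|x|\sim1$. Thus
\[
|D^k\psi(|x|)|\lesssim |x|^{-k}\qquad\text{for } k=0,1,2.
\]
We then apply the Leibniz rule to the triple product:
\[
D^2 u_\eps=\sum_{i+j+k=2} c_{ijk}\,D^i\tilde u_1\, D^j\eta_\eps\, D^k\psi .
\]
Each term is bounded by $|x|^{-a-i}L^\alpha\cdot|x|^{-j}\cdot|x|^{-k}=|x|^{-a-2}L^\alpha$. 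The first-order case works the same way and gives $|x|^{-a-1}L^\alpha$.

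We expect no real obstacle. The only points needing care are these:
\begin{itemize}
\item The factor $|\log\eps|^{-1}$ in the bounds on $\eta_\eps$ is simply discarded.
\item The lower-order logarithmic powers $L^{\alpha-1}$ and $L^{\alpha-2}$ are absorbed using $L\ge\log 2$ on $|x|<\frac12$.
\item The implicit constants depend only on $N$, $s$, $\alpha$, $\xi$ and $\psi$, and not on $\eps$.
\end{itemize}
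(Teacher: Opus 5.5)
Your proof is correct and follows essentially the same route as the paper: direct differentiation of $u_\eps=\tilde u_1\,\eta_\eps\,\psi(|x|)$, with term-by-term bounds that use $|\eta_\eps^{(k)}(r)|\lesssim r^{-k}$ and absorb the lower logarithmic powers via $\log(1/|x|)\ge\log 2$. The paper writes out $u_\eps'(r)$ and $u_\eps''(r)$ explicitly and uses the radial identity $|D^2u_\eps|^2=(u_\eps'')^2+(N-1)(u_\eps'/r)^2$, whereas you group the same terms through the Leibniz rule with factor-wise bounds, which also treats the $\psi$ factor more cleanly than the paper's ``without loss of generality'' reduction.
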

\begin{proof}
Using that, for a $C^2(\RR^N)$ radial function $v=v(|x|)=v(r)$, $\partial_{x_i x_j}v(x)=v''(r)\frac{X_i X_j}{r^2}+v'(r)\frac{\delta_{i j} r^2-X_i X_j}{r^3}$ we get that $$|D^2 u_\eps(x)|^2=(u''_\eps(r))^2+(N-1)\left(\frac{u'_\eps(r)}{r}\right)^2, \quad\text{for any}\  x\in\RR^N.$$\\
If $|x|\leq\eps^2$ or $|x|\geq \frac{1}{2}$ the conclusion easily follows. If $|x|\in \left(\eps^2,\frac{1}{2}\right)$, then, since $$\|\psi\|_{L^{\infty}(\RR)}, \|\psi'\|_{L^{\infty}(\RR)}, \|\psi''\|_{L^{\infty}(\RR)} \lesssim 1,$$ we can suppose, without loss of generality that $\psi(|x|)=1$ if $|x|\in \left(\frac{1}{4},\frac{1}{2}\right)$. Using the properties of $\eta_\eps$ we get that:
\begin{equation*}
u_\eps'(r)=-\frac{N-2s}{2} r^{-\frac{N+2(1-s)}{2}} \log^{\alpha}\left(\frac{1}{r}\right) \eta_\eps(r)+r^{-\frac{N-2s}{2}} \alpha \log^{\alpha-1}\left(\frac{1}{r}\right) \left(-\frac{r}{r^2}\right) \eta_\eps(r)\\
+r^{-\frac{N-2s}{2}} \log^{\alpha}\left(\frac{1}{r}\right) \eta'_\eps(r)
\end{equation*}
so that $$\frac{|u'_\eps(r)|}{r}\lesssim r^{-\frac{N+2(2-s)}{2}} \log^{\alpha}\left(\frac{1}{r}\right)$$
and 
\begin{equation*}
\begin{split}
u_\eps''(r)&=\frac{(N-2s)(N+2(1-s))}{4} r^{-\frac{N+2(2-s)}{2}} \log^{\alpha}\left(\frac{1}{r}\right) \eta_\eps(r)-\frac{N-2s}{2} r^{-\frac{N+2(1-s)}{2}}\alpha \log^{\alpha-1}\left(\frac{1}{r}\right) \left(-\frac{r}{r^2}\right) \eta_\eps(r)\\
&-\frac{N-2s}{2} r^{-\frac{N+2(1-s)}{2}} \log^{\alpha}\left(\frac{1}{r}\right) \eta'_\eps(r)+\frac{N+2(1-s)}{2} r^{-\frac{N+2(2-s)}{2}} \alpha\log^{\alpha-1}\left(\frac{1}{r}\right) \eta_\eps(r)\\
&-r^{-\frac{N+2(1-s)}{2}} \alpha(\alpha-1) \log^{\alpha-2}\left(\frac{1}{r}\right) \left(-\frac{r}{r^2}\right) \eta_\eps(r)-r^{-\frac{N+2(1-s)}{2}} \alpha \log^{\alpha-1}\left(\frac{1}{r}\right) \eta'_\eps(r)\\
&-\frac{N-2s}{2} r^{-\frac{N+2(1-s)}{2}} \log^{\alpha}\left(\frac{1}{r}\right) \eta'_\eps(r) +r^{-\frac{N-2s}{2}} \alpha \log^{\alpha-1}\left(\frac{1}{r}\right) \left(-\frac{r}{r^2}\right) \eta_\eps'(r)\\
&+r^{-\frac{N-2s}{2}} \log^{\alpha}\left(\frac{1}{r}\right) \eta''_\eps(r)\\
\end{split}
\end{equation*}
so that $$|u''_\eps(r)|\lesssim r^{-\frac{N+2(2-s)}{2}} \log^{\alpha}\left(\frac{1}{r}\right).$$
The estimates in the Lemma now follow.
\end{proof}
\begin{lemma}
\label{lem:interpolation-error-Lq-norm}
Let $\eps=h^{\frac{\gamma}{2}}$ for some $\gamma\in (0,1)$. Then for any $\max\left\{1,\frac{2N}{N+2(2-s)}\right\}<q<\infty$ we get the following estimate for the $L^q(\RR^N)-$norm of the interpolation error:
\begin{equation*}
\|u_\eps-I_h u_\eps\|_{L^q(\RR^N)}\lesssim h^{\gamma\frac{N}{q}+2-\gamma\frac{N+2(2-s)}{2}} |\log h|^{\alpha}.
\end{equation*}
\end{lemma}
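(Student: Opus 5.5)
We split $\RR^N$ into the elements of $\TT_h$ near the origin and those away from it, using the local interpolation estimates together with Lemma \ref{lem:D2ueps}. Since $\BB$ is convex and the boundary nodes of $\BB_h$ lie on $\partial\BB$, we have $\BB_h\subset\BB$. On $\RR^N\setminus\BB_h$ both $u_\eps$ and $I_h u_\eps$ vanish, because $u_\eps$ is supported in $B_{1/2}(0)$ and $\BB\setminus \BB_h$ is a thin layer near $\partial\BB$. Hence
\[
\|u_\eps-I_hu_\eps\|^q_{L^q(\RR^N)}=\sum_{T\in\TT_h}\|u_\eps-I_hu_\eps\|^q_{L^q(T)} .
\]
We split this sum into the elements $T$ meeting $B_{c h}(0)$, for a fixed constant $c$ depending on $\sigma,\rho$, and the remaining elements. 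Since $\eps^2=h^\gamma\gg h$, the near elements lie where $u_\eps\equiv 0$. There $I_h u_\eps$ vanishes as well, because its nodal values are values of $u_\eps$ at nodes inside $B_{\eps^{2-\mu}}(0)$. So the near elements contribute nothing. (Even without this observation they could be bounded crudely using $\|u_\eps\|_{L^\infty(T)}$.)

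For an element $T$ in the second group, with $h\lesssim |x|$ on $T$, we use the standard local estimate \cite[Theorem 4.4.20]{BrennerScott} for $W^{2,q}$ functions on a shape-regular element, $q>N/2$ or via $W^{2,\infty}$:
\[
\|u_\eps-I_hu_\eps\|_{L^q(T)}\lesssim h^2\|D^2u_\eps\|_{L^q(T)}.
\]
To avoid restrictions on $q$, I would instead use the pointwise form
\[
\|u_\eps-I_hu_\eps\|_{L^\infty(T)}\lesssim h^2\|D^2u_\eps\|_{L^\infty(T)}.
\]
Combining with Lemma \ref{lem:D2ueps}, and noting that $|x|$ is comparable on $T$ when $T$ is away from the origin, the $q$-th power of the error on the far region is at most
\[
h^{2q}\int_{\eps^2\lesssim|x|<1} |x|^{-q\frac{N+2(2-s)}{2}}\log^{q\alpha}\Big(\frac{1}{|x|}\Big)\,\dd x .
\]

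The hypothesis $q>\frac{2N}{N+2(2-s)}$ means exactly that the exponent $q\frac{N+2(2-s)}{2}$ exceeds $N$. So the radial integral is dominated by its lower endpoint $r\sim\eps^2=h^\gamma$, and it is bounded by
\[
(h^{\gamma})^{N-q\frac{N+2(2-s)}{2}}\,|\log h|^{q\alpha}.
\]
Taking $q$-th roots gives
\[
h^{2+\gamma\frac Nq-\gamma\frac{N+2(2-s)}{2}}|\log h|^\alpha ,
\]
which is the claimed bound.

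The main obstacle is the transition zone: elements that straddle $|x|\sim\eps^2$, or more precisely $\eps^{2-\mu}$, where $u_\eps$ switches on. There the bound of Lemma \ref{lem:D2ueps} is only available for $|x|>\eps^2$. Since $\eps^2\gg h$, each such element lies in $\{|x|\ge \eps^2/2\}$, and there $D^2u_\eps$ is either zero or bounded as in Lemma \ref{lem:D2ueps} with $|x|\sim\eps^2$. So these elements are absorbed into the same lower-endpoint estimate, and I would check this uniformly using quasi-uniformity of the mesh.
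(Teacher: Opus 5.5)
Your proposal is correct and follows essentially the same route as the paper. The paper also bounds the error elementwise by $h^{2}|T|^{1/q}\|D^{2}u_\eps\|_{L^\infty(T)}$, citing Lemma 3.1 of Ignat--Zuazua, and restricts to elements avoiding $B_{h^\gamma/2}(0)$, where $|x|$ is comparable across each element because $h^\gamma\gg h$. It then applies Lemma \ref{lem:D2ueps} and evaluates the radial integral at its lower endpoint $h^\gamma$ using $q\frac{N+2(2-s)}{2}>N$; your handling of the near-origin and transition elements is, if anything, more explicit than the paper's.
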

\begin{proof}
Since $u_\eps$ vanishes inside $B_{\eps^2}(0)$ and outside $B_{\frac{1}{2}}(0)$ we get that $$\|u_\eps-I_h u_\eps\|_{L^q(\RR^N)}=\|u_\eps-I_h u_\eps\|_{L^q(B_{\frac{1}{2}}(0)\setminus {B}_{h^\gamma}(0))}.$$
We define the following set of mesh triangles: $$\TT^1_h\coloneqq \{T\in \TT_h: T\cap B_{\frac{h^{\gamma}}{2}}(0)=\emptyset, T\subset B_{\frac{1}{2}+h}(0)\}.$$
Thus, $$\|u_\eps-I_h u_\eps\|^q_{L^q(\RR^N)}\leq\sum_{T\in \TT^1_h} \int_{T} |(u_\eps-I_h u_\eps)(x)|^q \, \dd x.$$
Using \cite[Lemma 3.1]{Ignat2025} we get that
\begin{equation}
\label{eq:interpolation-error-Lq-norm-1}
\|u_\eps-I_h u_\eps\|^q_{L^q(\RR^N)}\lesssim_q h^{2q} \sum_{T\in \TT^1_h} |T| \|D^2 u_\eps\|^q_{L^{\infty}(T)}.
\end{equation}
Using Lemma \ref{lem:D2ueps}, we obtain 
\begin{equation}
\label{eq:interpolation-error-Lq-norm-2}
|T| \|D^2 u_\eps\|^q_{L^{\infty}(T)}\lesssim |T| \sup_{x\in T}  |x|^{-q\frac{N+2(2-s)}{2}} \log^{q\alpha}\left(\frac{1}{|x|}\right).
\end{equation}
We aim to return to the $L^q(T)$ setting by proving that the norm of points in a triangle $T\in \TT^1_h$ are comparable with each other. Indeed, for every $x,y\in T\in \TT^1_h$ we have $$|x|\geq \frac{h^{\gamma}}{2}\geq \frac{9h}{2}\geq \frac{9h_T}{2}\geq \frac{9|x-y|}{2}\geq \frac{9|x|}{2}-\frac{9|y|}{2},$$
and, as a result, $9|y|\geq 7|x|$. It follows that: $$\sup_{y\in T} |y|\leq \frac{9}{7}\inf_{y\in T} |y|,$$
which further implies that 
\begin{equation*}
\begin{aligned}
\sup_{y\in T} |y|^{-q\frac{N+2(2-s)}{2}} \log^{q\alpha}\left(\frac{1}{|y|}\right)&\leq \inf_{y\in T} |y|^{-q\frac{N+2(2-s)}{2}} \left(\frac{9}{7}\right)^{q\frac{N+2(2-s)}{2}} \log^{q\alpha}\left(\frac{9}{7|y|}\right)\\
&\leq C_{q,N,s} \inf_{y\in T} |y|^{-\frac{N+2(2-s)}{2}} \log^{q\alpha}\left(\frac{1}{|y|}\right),
\end{aligned}
\end{equation*}
where $C_{q,N,s}$ is a constant which depends only on the exponent $q$, the dimension $N$ and the fractional parameter $s$. Plugging this inequality into \eqref{eq:interpolation-error-Lq-norm-1} and \eqref{eq:interpolation-error-Lq-norm-2} we obtain that:
\begin{equation*}
\begin{split}
\|u_\eps-I_h u_\eps\|^q_{L^q(\RR^N)} &\lesssim_q h^{2q} \sum_{T\in \TT^1_h} \int_{T}  |x|^{-q\frac{N+2(2-s)}{2}} \log^{q\alpha}\left(\frac{1}{|x|}\right) \, \dd x\\
& \lesssim_q h^{2q} \int_{B_{\frac{1}{2}+h}(0)\setminus B_{\frac{h^\gamma}{2}}(0)} |x|^{-q\frac{N+2(2-s)}{2}} \log^{q\alpha}\left(\frac{1}{|x|}\right) \, \dd x\\
&\lesssim_q h^{2q} \int_{h^\gamma}^{\frac{1}{2}+h} r^{N-1-q\frac{N+2(2-s)}{2}} \log^{q\alpha}\left(\frac{1}{r}\right) \, \dd r\\
& \lesssim_q h^{2q} |\log h|^{q\alpha} \int_{h^\gamma}^{\frac{1}{2}+h} r^{N-1-q\frac{N+2(2-s)}{2}} \, \dd r\\
&\lesssim_q h^{2q +\gamma N-\gamma q\frac{N+2(2-s)}{2}} |\log h|^{q\alpha}.
\end{split} 
\end{equation*}
\end{proof}
\begin{lemma}
\label{lem:interpolation-error-Lp-norm}
Let $\eps=h^{\frac{\gamma}{2}}$ for some $\gamma\in (0,1)$. Then for any $\max\left\{1,\frac{2N}{N+2(2-s)}\right\}<p<\infty$ we get the following estimate for the $\dot{W}^{1,p}(\RR^N)-$norm of the interpolation error:
\begin{equation*}
\|D(u_\eps-I_h u_\eps)\|_{L^p(\RR^N)}\lesssim_p h^{\gamma\frac{N}{p}+1-\gamma\frac{N+2(2-s)}{2}} |\log h|^{\alpha}.
\end{equation*}
\end{lemma}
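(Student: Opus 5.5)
The argument runs parallel to the proof of Lemma \ref{lem:interpolation-error-Lq-norm}, with the $L^q$ interpolation estimate replaced by the corresponding $W^{1,p}$ one, which costs one power of $h$ instead of two. The first step is localization. $u_\eps$ vanishes in $B_{\eps^{2-\mu}}(0)\supset B_{\eps^2}(0)=B_{h^\gamma}(0)$ and outside $B_{\frac12}(0)$, and $I_h u_\eps$ vanishes on any element all of whose vertices lie in these regions. Hence $D(u_\eps-I_hu_\eps)$ can be nonzero only on elements meeting $B_{\frac12}(0)\setminus B_{h^\gamma}(0)$. Since $h\ll h^\gamma$, these are contained in the family $\TT^1_h$ already introduced, namely elements disjoint from $B_{\frac{h^\gamma}{2}}(0)$ and contained in $B_{\frac12+h}(0)$. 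Elements meeting $\partial\Omega_h$ cause no trouble, because $u_\eps$ is supported in $B_{\frac12}(0)$, far from $\partial\BB$. Therefore
\[
\|D(u_\eps-I_hu_\eps)\|^p_{L^p(\RR^N)}\le \sum_{T\in\TT^1_h}\int_T|D(u_\eps-I_hu_\eps)|^p\,\dd x .
\]

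Next comes the local estimate. On each $T\in\TT^1_h$, $u_\eps$ is $C^2$ by Lemma \ref{lem:D2ueps}. The standard elementwise interpolation estimate for shape-regular elements (the gradient counterpart of \cite[Lemma 3.1]{Ignat2025}, obtained from the Bramble--Hilbert/Taylor argument and the regularity constant $\sigma$) gives
\[
\int_T|D(u_\eps-I_hu_\eps)|^p\,\dd x\lesssim_p h^{p}\,|T|\,\|D^2u_\eps\|^p_{L^\infty(T)} .
\]
Lemma \ref{lem:D2ueps} then bounds $\|D^2u_\eps\|_{L^\infty(T)}$ by $\sup_{x\in T}|x|^{-\frac{N+2(2-s)}{2}}\log^{\alpha}(1/|x|)$.

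To pass from the supremum back to an integral, I reuse the comparability shown in the proof of Lemma \ref{lem:interpolation-error-Lq-norm}: for $T\in\TT^1_h$ one has $\sup_T|y|\le\frac97\inf_T|y|$. The supremum over $T$ is therefore bounded by a constant times the pointwise value at any $x\in T$, since $\log(9/(7|x|))\lesssim\log(1/|x|)$ for small $|x|$. Summing over $T$ gives
\[
\|D(u_\eps-I_hu_\eps)\|^p_{L^p}\lesssim_p h^p\int_{h^\gamma/2}^{\frac12+h} r^{N-1-p\frac{N+2(2-s)}{2}}\log^{p\alpha}\!\Big(\frac1r\Big)\,\dd r .
\]
The logarithm is at most $|\log h|^{p\alpha}$ up to a constant, and in the region $\frac{1}{2}<r<\frac12+h$ it stays bounded. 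The assumption $p>\frac{2N}{N+2(2-s)}$ makes the exponent $N-1-p\frac{N+2(2-s)}{2}$ smaller than $-1$. The integral is then dominated by its lower endpoint and is $\lesssim h^{\gamma N-\gamma p\frac{N+2(2-s)}{2}}$. Taking $p$-th roots yields $h^{\gamma\frac Np+1-\gamma\frac{N+2(2-s)}{2}}|\log h|^\alpha$.

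The step I expect to need the most care is the local gradient interpolation estimate with an $L^\infty$ norm of the Hessian, because it is the only ingredient not already used verbatim in the $L^q$ lemma. I would prove it on the reference element. There $\hat v-\hat I\hat v$ vanishes at the vertices, so $\|\hat D(\hat v-\hat I\hat v)\|_{L^\infty}\lesssim\|\hat D^2\hat v\|_{L^\infty}$ by Taylor expansion. Scaling back to $T$ via the affine map gives a factor $h_T/\rho_T\cdot h_T\lesssim\sigma h$ and then the factor $|T|^{1/p}$ from integration. The remaining steps are bookkeeping identical to the previous lemma.
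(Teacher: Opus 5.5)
Your proposal is correct and follows the paper's proof essentially step for step. You localize to $\TT^1_h$, apply the elementwise estimate $\int_T|D(u_\eps-I_hu_\eps)|^p\lesssim h^p|T|\,\|D^2u_\eps\|^p_{L^\infty(T)}$ (which the paper simply cites from \cite[Lemma 3.1]{Ignat2025} rather than deriving on the reference element), bound $D^2u_\eps$ via Lemma \ref{lem:D2ueps}, return to an integral using the $\frac97$-comparability on $\TT^1_h$, and integrate radially using $p>\frac{2N}{N+2(2-s)}$. Your element-based localization and the lower radial limit $h^\gamma/2$ are, if anything, slightly more careful than the paper's write-up.
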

\begin{proof}
We proceed as in the proof of Lemma \ref{lem:interpolation-error-Lq-norm}. Using \cite[Lemma 3.1]{Ignat2025} we get that:
\begin{equation*}
\begin{split}
\|D(u_\eps-I_h u_\eps)\|^p_{L^p(\RR^N)} &\lesssim_p h^{p} \sum_{T\in \TT^1_h} \int_{T}  |x|^{-p\frac{N+2(2-s)}{2}} \log^{p\alpha}\left(\frac{1}{|x|}\right) \, \dd x\\
& \lesssim_p h^{p} \int_{B_{\frac{1}{2}+h}(0)\setminus B_{\frac{h^\gamma}{2}}(0)} |x|^{-p\frac{N+2(2-s)}{2}} \log^{p\alpha}\left(\frac{1}{|x|}\right) \, \dd x\\
&\lesssim_p h^{p} \int_{h^\gamma}^{\frac{1}{2}+h} r^{N-1-p\frac{N+2(2-s)}{2}} \log^{p\alpha}\left(\frac{1}{r}\right) \, \dd r\\
& \lesssim_p h^{p} |\log h|^{p\alpha} \int_{h^\gamma}^{\frac{1}{2}+h} r^{N-1-p\frac{N+2(2-s)}{2}} \, \dd r\\
&\lesssim_p h^{p +\gamma N-\gamma p\frac{N+2(2-s)}{2}} |\log h|^{p\alpha}.
\end{split} 
\end{equation*}
\end{proof}
\printbibliography

@misc{zuazua2026,
      title={The Coercivity Gap in Neural PDE Solvers: Parameter Escape and Functional Convergence}, 
      author={Enrique Zuazua},
      year={2026},
      eprint={2606.04018},
      archivePrefix={arXiv},
      primaryClass={math.NA},
      url={https://arxiv.org/abs/2606.04018}, 
}

@article{pratelli,
author = {Antonietti, Paola and Pratelli, Aldo},
year = {2011},
month = {12},
pages = {37-64},
title = {Finite element approximation of the Sobolev constant},
volume = {117},
journal = {Numerische Mathematik},
doi = {10.1007/s00211-010-0347-y}
}

@article{Ignat2025,
title = {Sharp numerical approximation of the Hardy constant},
journal = {Discrete and Continuous Dynamical Systems},
year = {2025},
issn = {1078-0947},
doi = {10.3934/dcds.2025165},
url = {https://www.aimsciences.org/article/id/68f09b4fcb5dde21e755067d},
author = {Liviu I. Ignat and Enrique Zuazua},
}

@incollection{Tzirakis23,
author = {Konstantinos Tzirakis},
title = {Stability Estimates for Fractional Hardy-Schrödinger Operators},
booktitle = {Fixed Point Theory and Chaos},
publisher = {IntechOpen},
address = {London},
year = {2023},
editor = {Guillermo Huerta-Cuellar},
chapter = {6},
doi = {10.5772/intechopen.109606},
url = {https://doi.org/10.5772/intechopen.109606}
}

@article{FRANK20083407,
title = {Non-linear ground state representations and sharp Hardy inequalities},
journal = {Journal of Functional Analysis},
volume = {255},
number = {12},
pages = {3407-3430},
year = {2008},
issn = {0022-1236},
doi = {https://doi.org/10.1016/j.jfa.2008.05.015},
url = {https://www.sciencedirect.com/science/article/pii/S0022123608002140},
author = {Rupert L. Frank and Robert Seiringer},
}

@book{GLeoni2023,
    AUTHOR = {Giovanni Leoni},
    TITLE = {A First Course in Fractional Sobolev Spaces} ,
    PUBLISHER = {American Mathematical Society},
    YEAR = {2023},
}

@book{evans2010partial,
  title={Partial differential equations},
  author={Evans, Lawrence C},
  volume={19},
  year={2010},
  publisher={American Mathematical Society},
  address={Providence, RI},
  edition={2nd},
  isbn={978-0-8218-4974-3}
}

@book{BrennerScott,
    AUTHOR = {Susanne C. Brenner and L. Ridgway Scott},
    TITLE = {The Mathemathical Theory of Finite Element Methods},
    PUBLISHER = {Springer},
    YEAR = {2008}
}

@article{borthagaray2018,
author = {Borthagaray, Juan and Del Pezzo, Leandro and Martinez, Sandra},
year = {2018},
month = {10},
pages = {},
title = {Finite Element Approximation for the Fractional Eigenvalue Problem},
volume = {77},
journal = {Journal of Scientific Computing},
doi = {10.1007/s10915-018-0710-1}
}

@article{chenweth2021,
author = {Huyuan, Chen and Weth, Tobias},
year = {2021},
month = {03},
title = {The Poisson problem for the fractional Hardy operator: Distributional identities and singular solutions},
journal = {Transactions of the American Mathematical Society},
doi = {10.1090/tran/8443}
}

@article{ignat2024,
author = {Della Pietra, Francesco and Fantuzzi, Giovanni and Ignat, Liviu I. and Masiello, Alba Lia and Paoli, Gloria and Zuazua Iriondo, Enrique},
 faupublication = {yes},
 journal = {Journal of Convex Analysis},
 pages = {497-523},
 title = {{Finite} {Element} {Approximation} of the {Hardy} {Constant}},
 volume = {31},
 year = {2024}
}

@article{Herbst1977,
  title = {Spectral theory of the operator $(p^{2}+m^{2})^{1/2} - Ze^{2}/r$},
  author = {Herbst, I. W.},
  journal = {Communications in Mathematical Physics},
  volume = {53},
  pages = {285--294},
  year = {1977},
  publisher = {Springer},
  doi = {10.1007/BF01609852}
}

@article {frankJAMS2008,
    AUTHOR = {Frank, Rupert L. and Lieb, Elliott H. and Seiringer, Robert},
     TITLE = {Hardy-{L}ieb-{T}hirring inequalities for fractional
              {S}chr\"odinger operators},
   JOURNAL = {J. Amer. Math. Soc.},
  FJOURNAL = {Journal of the American Mathematical Society},
    VOLUME = {21},
      YEAR = {2008},
    NUMBER = {4},
     PAGES = {925--950},
      ISSN = {0894-0347,1088-6834},
   MRCLASS = {35P20 (26D15 35J10 35P15 46E35 47F05 81Q10)},
  MRNUMBER = {2425175},
MRREVIEWER = {G\"unter\ Berger},
       DOI = {10.1090/S0894-0347-07-00582-6},
       URL = {https://doi.org/10.1090/S0894-0347-07-00582-6},
}

@article{dima2026galerkin,
  title={Galerkin Approximation of the Fractional Sobolev Constant},
  author={Dima, Andreea and Ignat, Liviu I},
  journal={arXiv preprint arXiv:2605.13347},
  year={2026}
}

@article{banerjee2026quantitative,
  title={Quantitative stability for fractional Hardy inequalities: Rearrangement-free techniques and Emden-Fowler analysis},
  author={Banerjee, Avas and Ganguly, Debdip and Sahu, Vivek},
  journal={arXiv preprint arXiv:2605.15748},
  year={2026}
}

@article{krzysztof2022,
title = {Optimal Hardy inequality for the fractional Laplacian on $L^p$},
journal = {Journal of Functional Analysis},
volume = {282},
number = {8},
year = {2022},
issn = {0022-1236},
doi = {https://doi.org/10.1016/j.jfa.2022.109395},
url = {https://www.sciencedirect.com/science/article/pii/S0022123622000155},
author = {Krzysztof Bogdan and Tomasz Jakubowski and Julia Lenczewska and Katarzyna Pietruska-Pałuba}
}

@article{TZIRAKIS20164513,
title = {Sharp trace Hardy–Sobolev inequalities and fractional Hardy–Sobolev inequalities},
journal = {Journal of Functional Analysis},
volume = {270},
number = {12},
pages = {4513-4539},
year = {2016},
issn = {0022-1236},
doi = {https://doi.org/10.1016/j.jfa.2015.11.016},
url = {https://www.sciencedirect.com/science/article/pii/S0022123615004814},
author = {Konstantinos Tzirakis},
}

@article{ignat2025optimalconvergenceratesfinite,
  title={Optimal convergence rates for the finite element approximation of the Sobolev constant},
  author={Ignat, Liviu I and Zuazua, Enrique},
  journal={arXiv preprint arXiv:2504.09637},
  year={2025}
}
\end{document}